\documentclass[11pt]{amsart}
\usepackage[margin=1.15in]{geometry}
\usepackage{amssymb,amsmath,mathtools}
\usepackage{booktabs}
\usepackage{enumitem}
\usepackage[expansion=false]{microtype}
\usepackage{tikz}
\usetikzlibrary{arrows.meta}
\usepackage{pgfplots}
\pgfplotsset{compat=1.18}
\usepackage[hidelinks]{hyperref}
\hypersetup{pdftitle={Retriangular numbers and affine maps between polygonal numbers},pdfauthor={Mateo Matijasevick}}

\newtheorem{theorem}{Theorem}[section]
\newtheorem{proposition}[theorem]{Proposition}
\newtheorem{lemma}[theorem]{Lemma}
\newtheorem{corollary}[theorem]{Corollary}
\newtheorem{introthm}{Theorem}

\theoremstyle{definition}
\newtheorem{example}[theorem]{Example}
\theoremstyle{remark}
\newtheorem{remark}[theorem]{Remark}

\numberwithin{equation}{section}

\newcommand{\N}{\mathbb{N}}
\newcommand{\Z}{\mathbb{Z}}
\newcommand{\Q}{\mathbb{Q}}
\newcommand{\R}{\mathbb{R}}
\newcommand{\cT}{\mathcal{T}}
\newcommand{\cP}{\mathcal{P}}
\newcommand{\cS}{\mathcal{S}}
\newcommand{\cN}{\mathcal{N}}
\DeclareMathOperator{\End}{End}
\DeclareMathOperator{\Aut}{Aut}
\DeclareMathOperator{\id}{id}

\definecolor{vizblue}{HTML}{2A78D6}
\definecolor{vizorange}{HTML}{EB6834}
\definecolor{vizgray}{HTML}{52514E}

\pgfplotsset{
  parabolaaxis/.style={
    axis lines=left,
    axis line style={-{Stealth[length=3pt]}, black!55},
    width=0.5\textwidth, height=0.52\textwidth,
    xmin=-6.9, xmax=6.0, ymin=-1.6, ymax=17.2,
    xtick={-6,-4,-2,0,2,4}, ytick={0,5,10,15},
    tick style={black!55},
    tick label style={font=\scriptsize, text=black!75},
    xlabel={$x$}, ylabel={$y$},
    every axis x label/.style={at={(ticklabel* cs:1)}, anchor=west, font=\scriptsize},
    every axis y label/.style={at={(ticklabel* cs:1)}, anchor=south, font=\scriptsize},
    clip=false,
  },
  latticepts/.style={only marks, mark=*, mark size=1.3pt, black},
}

\title[Retriangular numbers]{Retriangular numbers and affine maps\\ between polygonal numbers}
\author{Mateo Matijasevick}
\address{Purdue University}
\email{mmatijas@purdue.edu}
\subjclass[2020]{Primary 11B83; Secondary 11D09, 11E25, 20M13}
\keywords{Triangular numbers, polygonal numbers, affine maps, Pell equations, arithmetical congruence monoids, non-unique factorization}

\begin{document}

\begin{abstract}
A positive integer $a$ is called \emph{retriangular} if there is a positive integer $b$ such that $aT+b$ is a triangular number whenever $T$ is. We show that the retriangular numbers are exactly the odd squares greater than $1$, and that for each of them $b$ is unique. The proof rests on a rigidity principle: a non-constant affine map carrying the values of one quadratic sequence into the values of another is induced by an affine map of the plane sending the vertex of one parabola to the vertex of the other. We use it to classify all affine maps between sets of polygonal numbers; the $s$-gonal numbers map affinely into the $t$-gonal numbers if and only if $M_t\mid M_s$, where $M_s=2(s-2)/\gcd(s,4)$ defines a bijection from $\{3,4,5,\dots\}$ onto the positive integers. We also treat sparse data, polynomial maps, figurate numbers of higher degree and sums of two triangular numbers, and we study the monoid of affine self-maps of an arbitrary set of integers. For the $s$-gonal numbers this monoid is an arithmetical congruence monoid, and the factorization of self-maps into irreducible ones is unique only for triangular and square numbers.
\end{abstract}

\maketitle

\section{Introduction}\label{sec:intro}

Let $T_n=\frac{n(n+1)}{2}$, and let $\cT=\{T_n:\ n\in\N\}$ be the set of triangular numbers. Since $8T_n+1=(2n+1)^2$, a fact quoted by Plutarch and used by Diophantus \cite[p.~84]{HeathHGM}, a positive integer $N$ is triangular if and only if $8N+1$ is a perfect square. Euler observed that $9T+1$, $25T+3$, $49T+6$ and $81T+10$ are triangular whenever $T$ is \cite[\S87, pp.~264--265]{Euler}; more generally (see \cite[(4.5)]{HoggattBicknell} and \cite{OEIS}),
\begin{equation}\label{eq:euler}
(2c+1)^2\,T_n+T_c=T_{(2c+1)n+c}\qquad(c\ge0,\ n\in\Z).
\end{equation}
We call a positive integer $a$ \emph{retriangular} if there is a positive integer $b$ such that $aT+b$ is triangular for every triangular number $T$. Our first result (Theorem~\ref{thm:main}) shows that \eqref{eq:euler} accounts for all of them: the retriangular numbers are exactly the odd squares greater than $1$, and for $a=(2c+1)^2$ the only admissible value of $b$ is $T_c$. This question was proposed by the author as a problem for the 2020 Colombian Mathematical Olympiad. A related question is Problem~B6 of the 1988 Putnam Competition, which asks for infinitely many pairs of integers $(a,b)$ such that, for every positive integer $t$, the number $at+b$ is triangular if and only if $t$ is \cite{KPV}; there, $0$ also counts as a triangular number. A pair with this property has $a>0$, because $at+b$ is triangular for $t=1$ and for all large triangular $t$, but not for $t=2$. Hence, by Corollary~\ref{cor:strong} and Proposition~\ref{prop:reflection}, the pairs with this property are exactly the pairs $\bigl((2c+1)^2,T_c\bigr)$ with $c\ge0$.

The mechanism behind this is geometric. If a non-constant affine map $N\mapsto aN+b$ sends the values of a quadratic polynomial $f$ at the positive integers into the values of another quadratic polynomial $g$, then it is induced by an affine map of the plane that carries the graph of $f$ onto the graph of $g$ and the vertex of one parabola to the vertex of the other (Proposition~\ref{prop:rigidity} and Section~\ref{sec:geometry}). For the $s$-gonal numbers
\[
P_s(n)=\frac{(s-2)n^2-(s-4)n}{2},\qquad \cP_s=\{P_s(n):\ n\in\N\}\qquad(s\ge3),
\]
this leads to a complete classification. Let
\[
M_s=\frac{2(s-2)}{\gcd(s,4)},
\]
which is the denominator of the abscissa $\frac{s-4}{2(s-2)}$ of the vertex of $P_s$.

\begin{introthm}\label{thmA}
Let $s\ge3$, $a>0$ and $b\in\R$. Then $aP+b\in\cP_s$ for every $P\in\cP_s$ if and only if $a=x^2$ for some $x\in\N$ with $x\equiv1\pmod{M_s}$ and
\[
b=\frac{(s-4)^2(x^2-1)}{8(s-2)}.
\]
\end{introthm}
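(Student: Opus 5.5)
Complete the square: $8(s-2)P_s(n)+(s-4)^2=\bigl(2(s-2)n-(s-4)\bigr)^2$. Put $d=s-2$ and $e=s-4$. Then $N\in\cP_s$ if and only if $8dN+e^2=u^2$ for some $u\in\N$ with $u\equiv -e\pmod{2d}$ and $u>0$. The positivity and $n\ge1$ conditions are harmless once we note $u=2dn-e\ge 2d-e=s>0$.

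\emph{Sufficiency.} Suppose $a=x^2$ with $x\equiv1\pmod{M_s}$, $x\ge1$, and $b$ as in the statement. Then
\[
8d(aP+b)+e^2=x^2(8dP+e^2)=(xu)^2 .
\]
So we need $xu\equiv -e\pmod{2d}$ whenever $u\equiv -e\pmod{2d}$. This amounts to $(x-1)e\equiv0\pmod{2d}$, i.e.\ $x\equiv1$ modulo $2d/\gcd(2d,e)$. Since $\gcd(2(s-2),s-4)=\gcd(2(s-2),2)\cdot(\ldots)$, one checks directly that $\gcd(2d,e)=\gcd(s,4)$, so this modulus is exactly $M_s$. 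We also need $xu>0$, which is clear. Finally, writing $xu=2dm-e$, we get $m=(xu+e)/(2d)\ge1$, because $xu\ge u\ge s$; hence $aP+b=P_s(m)\in\cP_s$. The minor case $x=1$ gives $b=0$ and is trivially fine.

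\emph{Necessity.} I would invoke Proposition~\ref{prop:rigidity} with $f=g=P_s$. Since $a>0$, the map is non-constant. Rigidity then says the map is induced by a plane affine map $(n,y)\mapsto(\lambda n+\mu,\,ay+b)$ carrying the graph of $P_s$ to itself and the vertex to the vertex. Comparing leading coefficients gives $a=\lambda^2$. Fixing the vertex $n_0=e/(2d)$ gives $\lambda n_0+\mu=n_0$, and then matching constant terms gives $b=P_s(n_0)(1-a)\cdot(-1)$. Concretely, $b=(e^2/(8d))(a-1)$, which is the stated formula with $x=|\lambda|$.

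Moreover, the proposition should give that $P_s(n)$ maps to $P_s(\lambda n+\mu)$ with $\lambda n+\mu$ a positive integer for all large $n$. Taking differences in $n$ shows $\lambda\in\Z$ and $\mu\in\Z$. Replacing $\lambda$ by $-\lambda$ via the reflection about the vertex, if necessary (this is Proposition~\ref{prop:reflection}), we may take $x=\lambda>0$.

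It remains to get the congruence. From $\mu=n_0(1-x)\in\Z$ we get $e(x-1)/(2d)\in\Z$, which is $x\equiv1\pmod{M_s}$ by the gcd computation above. If $\lambda$ came out negative, then $\mu=n_0(1-\lambda)$ with the reflected branch $n\mapsto 2n_0-n$. I would handle this by the same integrality argument applied to the reflected image: an index $m$ with $2dm-e=\pm xu$ must exist for all large $u$, and the sign $-$ is impossible for positivity.

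\emph{Main obstacle.} The hardest step is extracting integrality of $\lambda$ and $\mu$ from the rigidity statement. If Proposition~\ref{prop:rigidity} only asserts a real affine map, I would argue directly instead. The identity $8d(aP+b)+e^2=v^2$ for all $P=P_s(n)$ shows that $a(2dn-e)^2+c$ is a perfect square for all $n$, where $c=8db+e^2-ae^2$. A quadratic taking square values at all large integers is the square of a linear polynomial with rational coefficients. This forces $c=0$ and $a=x^2$ with $x$ rational; since $x\cdot(2dn-e)$ is an integer for all $n$, $x$ is an integer. The congruence then follows as above, because $xu\equiv-e\pmod{2d}$ must hold for every $u\equiv -e$.
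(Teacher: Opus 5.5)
Your proposal is correct and follows the paper's proof. Necessity is Proposition~\ref{prop:rigidity} with $f=g=P_s$, which already gives $\rho\in\N$ and $\beta=(1-\rho)\nu_s\in\Z$, so your ``main obstacle'' does not arise and the detour through a reflected index is unnecessary; Proposition~\ref{prop:reflection} is in any case about self-maps reflecting membership, not about reflecting the index. Your sufficiency computation via Diophantus' identity is the paper's index map $n\mapsto xn-jp_s$, written in the coordinate $u=2(s-2)n-(s-4)$.

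There are two small slips. The expression $b=P_s(n_0)(1-a)\cdot(-1)$ has the wrong sign: it should be $b=(1-a)P_s(n_0)$, which does equal your final $\frac{e^2}{8d}(a-1)$. In your fallback, $x(2dn-e)\in\Z$ for all $n$ only gives $x\in\frac{1}{\gcd(s,4)}\Z$. Integrality of $x$ should instead be read off from $x(2dn-e)=2dm_n-e$ at consecutive $n$, which gives $x=m_{n+1}-m_n$.
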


For instance, $49P+2$ is pentagonal whenever $P$ is, and $16P+5$ is octagonal whenever $P$ is. For $s\ne4$ the analogues of the retriangular numbers are thus the squares $x^2$ with $x>1$ and $x\equiv1\pmod{M_s}$, while for the squares ($s=4$) the intercept is forced to be $0$. For odd $s$, the pairs of positive integers $(a,b)$ with $a\cP_s+b\subseteq\cP_s$ are also the subject of a computational problem of Project Euler \cite{PE647}. Maps between different families are governed by divisibility.

\begin{introthm}\label{thmB}
Let $s,t\ge3$. There exist $a>0$ and $b\in\R$ with $a\cP_s+b\subseteq\cP_t$ if and only if $M_t\mid M_s$. Moreover, $s\mapsto M_s$ is a bijection from $\{3,4,5,\dots\}$ onto $\N$.
\end{introthm}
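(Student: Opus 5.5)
The plan is to reduce the statement to arithmetic of the vertex abscissae via the rigidity principle (Proposition~\ref{prop:rigidity}), and to verify the bijection separately. First, complete the square:
\[
8(s-2)P_s(n)+(s-4)^2=\bigl(2(s-2)n-(s-4)\bigr)^2 ,
\]
so that $P_s(n)=\frac{s-2}{2}(n-v_s)^2-\frac{(s-4)^2}{8(s-2)}$ with $v_s=\frac{s-4}{2(s-2)}$. Since $\gcd\bigl(s-4,2(s-2)\bigr)=\gcd(s-4,4)=\gcd(s,4)$, we can write $v_s=p_s/M_s$ in lowest terms with $\gcd(p_s,M_s)=1$, as stated in the introduction.

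\textbf{Necessity.} Suppose $a\cP_s+b\subseteq\cP_t$ with $a>0$. Proposition~\ref{prop:rigidity} gives an affine map of the plane carrying the graph of $P_s$ to that of $P_t$ and vertex to vertex. I expect this to take the form $m(n)=\lambda(n-v_s)+v_t$ with $aP_s(n)+b=P_t(m(n))$ for all $n\in\N$. Comparing leading coefficients then gives $a=\lambda^2(t-2)/(s-2)$, and $b$ is determined by the constant terms.

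The values $m(n)$ are integer indices. Hence $\lambda=m(n+1)-m(n)\in\Z$, and after replacing $\lambda$ by $-\lambda$ if needed (the reflection ambiguity), we may take $\lambda>0$, since large $n$ must go to large positive indices. Integrality of $m(n)$ also forces $\lambda v_s-v_t\in\Z$. The reduced denominator of $\lambda p_s/M_s$ is $M_s/\gcd(\lambda,M_s)$, and it must equal the reduced denominator $M_t$ of $v_t$. Therefore $M_t\mid M_s$.

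\textbf{Sufficiency.} Suppose $M_t\mid M_s$. In $\Q/\Z$, the fractions with denominator dividing $M_s$ form a cyclic group of order $M_s$, generated by $p_s/M_s$ because $\gcd(p_s,M_s)=1$. So $v_t=k/M_s$ for some integer $k$, and we can pick $\lambda\equiv p_s^{-1}k\pmod{M_s}$. We choose $\lambda$ large and positive so that $m(n)=\lambda(n-v_s)+v_t$ is an integer $\ge1$ for every $n\ge1$. Setting $a=\lambda^2(t-2)/(s-2)>0$ and letting $b$ be the constant forced by the completed-square identities gives $aP_s(n)+b=P_t(m(n))\in\cP_t$.

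\textbf{Bijection.} This is a case check on $s$ modulo $4$:
\begin{itemize}
\item for odd $s$, $M_s=2(s-2)$ runs through $2,6,10,\dots$, the integers $\equiv2\pmod 4$;
\item for $s\equiv2\pmod4$, $M_s=s-2$ runs through $4,8,12,\dots$;
\item for $s\equiv0\pmod4$, $M_s=(s-2)/2$ runs through $1,3,5,\dots$.
\end{itemize}
Each case is strictly increasing and their images partition $\N$, so $s\mapsto M_s$ is a bijection.

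The main obstacle is the necessity step. It depends on the precise conclusion of Proposition~\ref{prop:rigidity}, namely that the induced map on indices is affine with $m(n)$ an integer for every $n$, not merely that the images lie on the parabola. If the proposition only gives an affine relation between the square roots $2(s-2)n-(s-4)$ and $2(t-2)m-(t-4)$ up to sign, I would first derive that relation. I would then deduce integrality of $\lambda$ from differences of consecutive indices at large $n$, where the sign is fixed.
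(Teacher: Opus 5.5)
Your proposal is correct and follows essentially the paper's route (Theorem~\ref{thm:between}(i) and Corollary~\ref{cor:divisibility}). Proposition~\ref{prop:rigidity} gives the index map $n\mapsto\rho n+\beta$ with $\rho\in\N$ and $\beta=\nu_t-\rho\nu_s\in\Z$ directly, so the obstacle you flag is already settled by its conclusion. Solvability of $\rho\nu_s\equiv\nu_t\pmod 1$ is then equivalent to $M_t\mid M_s$, because $\rho\mapsto\rho p_s/M_s$ induces a bijection $\Z/M_s\Z\to\frac1{M_s}\Z/\Z$. The bijectivity of $s\mapsto M_s$ is the same case check modulo $4$ as in the paper.
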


Thus the polygonal families, preordered by affine embeddability, form a copy of the positive integers ordered by divisibility, with the order reversed; all the maps are described explicitly in Theorem~\ref{thm:between}. Every family maps into the squares (by Diophantus' identity), a family maps into the triangular numbers exactly when $s\not\equiv0\pmod4$, and the only inclusion between two families is the classical one, $\cP_6\subset\cP_3$.

We then consider arbitrary sets of integers. For $S\subseteq\Z$, the affine maps $\phi$ with $\phi(S)\subseteq S$ form a monoid under composition, whose non-constant elements we call the \emph{self-maps} of $S$. When $\lvert S\rvert\ge2$, self-maps have integral slopes and are injective, and the surjective ones form a subgroup of the infinite dihedral group (Proposition~\ref{prop:basic}). For sparse sets they are rigid.

\begin{introthm}\label{thmC}
Let $S\subseteq\Z$ be infinite and bounded below, and assume that the differences between consecutive elements of $S$ tend to infinity. Then any two self-maps of $S$ commute, and each self-map is determined by its slope. If $S$ has a self-map other than the identity, there is a rational number $\omega\le\min S$ such that every self-map of $S$ is a homothety $x\mapsto\omega+a(x-\omega)$ with $a\in\N$.
\end{introthm}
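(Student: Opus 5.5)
The plan is to first pin down the slopes, then obtain uniqueness from the sparseness of $S$, and derive everything else from uniqueness.

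\emph{Slopes.} Let $\phi(x)=ax+b$ be a self-map of $S$. By Proposition~\ref{prop:basic}, $a$ is a nonzero integer. If $a<0$, then $\phi$ would send arbitrarily large elements of $S$ (which exist since $S$ is infinite and bounded below) to arbitrarily negative integers. This contradicts the assumption that $S$ is bounded below. Hence $a\in\N$.

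\emph{Determined by slope.} Suppose $\phi(x)=ax+b$ and $\psi(x)=ax+b'$ are two self-maps with the same slope, and suppose $b\ne b'$. For every $x\in S$, the numbers $ax+b$ and $ax+b'$ are two distinct elements of $S$ at distance $|b-b'|$. As $x\to\infty$ in $S$, these elements tend to infinity. But the gaps between consecutive elements of $S$ tend to infinity, so two distinct elements of $S$ that are both large must be more than $|b-b'|$ apart. This is a contradiction, so $b=b'$.

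\emph{Commutation.} Commutation now follows immediately. For self-maps $\phi,\psi$ with slopes $a,a'$, the composites $\phi\circ\psi$ and $\psi\circ\phi$ are both self-maps (they lie in the monoid and are non-constant) with the same slope $aa'$. By the previous step they coincide.

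\emph{Homothety structure.} Suppose $\phi(x)=ax+b$ is a self-map other than the identity. I first rule out $a=1$, which would make $\phi$ a translation by $b\ne0$.
\begin{itemize}[leftmargin=2em]
\item If $b>0$, then $x$ and $x+b$ both lie in $S$ for every $x\in S$. This is impossible for large $x$, since the gaps tend to infinity.
\item If $b<0$, then the iterates $\phi^k(x)=x+kb$ stay in $S$ and tend to $-\infty$, contradicting that $S$ is bounded below.
\end{itemize}
So $a\ge2$, and $\phi$ has the unique fixed point $\omega=b/(1-a)\in\Q$.

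Now take any self-map $\psi$. Since $\psi$ commutes with $\phi$, we get $\phi(\psi(\omega))=\psi(\phi(\omega))=\psi(\omega)$. So $\psi(\omega)$ is a fixed point of $\phi$, hence $\psi(\omega)=\omega$. An affine map with slope $a'$ fixing $\omega$ is exactly $x\mapsto\omega+a'(x-\omega)$, and $a'\in\N$ by the first step.

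\emph{Bound on $\omega$.} Let $m=\min S$. Since $\phi(m)\in S$, we have $\phi(m)\ge m$, that is $am+b\ge m$. Rearranging gives $(a-1)m\ge -b$, and dividing by $a-1>0$ gives $m\ge b/(1-a)=\omega$.

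I expect the only delicate point to be the uniqueness-by-slope step. There one must make sure that the sparseness hypothesis is applied to two distinct elements of $S$ that both tend to infinity; this holds because $a\ge1$. Everything after that is formal.
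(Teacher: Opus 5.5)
Your proof is correct and follows the paper's argument essentially step for step: uniqueness by slope from the growing gaps, commutation because $\phi\circ\psi$ and $\psi\circ\phi$ have the same slope, and a common centre because $\psi$ must send the unique fixed point of a non-identity $\phi$ to a fixed point of $\phi$. The differences are cosmetic. You exclude nontrivial translations by a separate case split, whereas the paper compares a slope-$1$ self-map with the identity using uniqueness. You also obtain $\omega\le\min S$ from $\phi(\min S)\ge\min S$, whereas the paper iterates $\phi$ on a hypothetical element below $\omega$.
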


For the polygonal numbers the centre $\omega$ is the minimum of $P_s$ on $\R$; this is why the intercept in Theorem~\ref{thmA} is forced by the slope. Composition of self-maps of $\cP_s$ corresponds to multiplication of the ratios $x$ in Theorem~\ref{thmA}, which raises a question of factorization.

\begin{introthm}\label{thmD}
Let $s\ge3$. Under composition, the self-maps of $\cP_s$ form a monoid isomorphic to the multiplicative monoid $H_{M_s}=\{x\in\N:\ x\equiv1\pmod{M_s}\}$. Every self-map is a composition of irreducible self-maps. This factorization is unique for every self-map if and only if $s\in\{3,4\}$, and all factorizations of each self-map have the same number of factors if and only if $s\in\{3,4,5,6,8\}$.
\end{introthm}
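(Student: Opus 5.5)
The isomorphism comes from Theorem~A; the factorization statements then reduce to a multiplicative question about residues of primes modulo $M_s$, settled with Dirichlet's theorem.

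\emph{The isomorphism.} By Theorem~A, the self-maps of $\cP_s$ are exactly the maps
\[
\phi_x(N)=x^2N+\frac{(s-4)^2(x^2-1)}{8(s-2)},\qquad x\in H_{M_s}.
\]
Distinct $x$ give distinct slopes $x^2$, so $x\mapsto\phi_x$ is a bijection $H_{M_s}\to\End$. Set $\omega=-\frac{(s-4)^2}{8(s-2)}$, the minimum of $P_s$ on $\R$. Then
\[
\phi_x(N)=\omega+x^2(N-\omega),
\]
which is the homothety form of Theorem~C. From this form, $\phi_x\circ\phi_y=\phi_{xy}$ is immediate, and $\phi_1=\id$. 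Hence the self-maps form a monoid isomorphic to $H_{M_s}$.

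\emph{Atomicity.} $H_M$ is a reduced submonoid of $\N$. A non-trivial factorization of $x>1$ has factors strictly smaller than $x$ and greater than $1$. Induction on $x$ therefore writes every element as a product of atoms.

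\emph{Unique factorization.} I will show it holds exactly when $M\in\{1,2\}$, using these values of $M_s$:
\[
M_3=2,\quad M_4=1,\quad M_5=6,\quad M_6=4,\quad M_8=3,
\]
and $M_s\notin\{1,2,3,4,6\}$ for all other $s$ by the bijectivity in Theorem~B.
\begin{itemize}[label={}]
\item If $M=1$, then $H_M=\N$. If $M=2$, then $H_M$ is the set of odd numbers. Both are free on their primes.
\item If $M\ge3$, then $\varphi(M)\ge2$, so there is a unit $u\not\equiv1\pmod M$. By Dirichlet, choose distinct primes $p_1,p_2\equiv u$ and $q_1,q_2\equiv u^{-1}$. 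Each $p_iq_j$ lies in $H_M$. It is an atom, since its only proper divisors $p_i,q_j$ are not $\equiv1$. Then $(p_1q_1)(p_2q_2)=(p_1q_2)(p_2q_1)$ gives two distinct factorizations.
\end{itemize}

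\emph{Half-factoriality.} I expect this to be the main step. The claim is that $H_M$ is half-factorial iff $\varphi(M)\le2$, i.e.\ $M\in\{1,2,3,4,6\}$, which corresponds to $s\in\{3,4,5,6,8\}$.

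If $\varphi(M)=2$, every unit is $\pm1$. An atom is then either a prime $\equiv1$ or a product of two primes $\equiv-1$. Indeed, any element with more prime factors splits into smaller members of $H_M$, by pairing up the $-1$ primes or isolating a $+1$ prime. So the length of any factorization of $x$ equals
\[
\#\{\text{primes}\equiv1\}+\tfrac12\,\#\{\text{primes}\equiv-1\},
\]
counted with multiplicity, which is an invariant of $x$.

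If $\varphi(M)\ge3$, there are two cases.
\begin{itemize}[label={}]
\item \emph{Some unit $u$ has order $k\ge3$.} Take primes $p\equiv u$ and $q\equiv u^{-1}$. Then $p^k$, $q^k$ and $pq$ are atoms, because no proper subproduct is $\equiv1$. The identity $p^kq^k=(pq)^k$ gives factorizations of lengths $2$ and $k\ge3$.
\item \emph{Otherwise the unit group is an elementary abelian $2$-group of rank $\ge2$.} Take units $u,v,uv$ that are distinct and $\ne1$, and primes $p\equiv u$, $q\equiv v$, $r\equiv uv$. Then $pqr$, $p^2$, $q^2$ and $r^2$ are atoms. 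The identity $p^2q^2r^2=(pqr)^2$ gives factorizations of lengths $3$ and $2$.
\end{itemize}

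The care needed here is in verifying that the displayed elements really are atoms: one checks that every proper sub-multiset of their prime factors has product $\not\equiv1\pmod M$.
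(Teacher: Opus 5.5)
Your proof is correct and is essentially the paper's explicit-factorization argument, the one given after the proof of Theorem~\ref{thm:factorization} that avoids Krull monoids and Carlitz's theorem. The differences are cosmetic: for $\varphi(M)\ge3$ you split into a unit of order $k\ge3$ (via $p^kq^k=(pq)^k$) and an elementary abelian $2$-group (via $p^2q^2r^2=(pqr)^2$), whereas the paper uses one six-prime identity $(pqr)(p'q'r')=(pp')(qq')(rr')$.

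You should also state that a self-map cannot have negative slope, since Theorem~\ref{thmA} only treats $a>0$. This follows from Proposition~\ref{prop:basic}(d), because $\cP_s$ is bounded below and unbounded above.
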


For hexagonal numbers, for example, $E_{6,9}\circ E_{6,49}=E_{6,21}\circ E_{6,21}$ with all four factors irreducible (Example~\ref{ex:hilbert}), where $E_{s,x}$ denotes the self-map of $\cP_s$ with index ratio $x$; this is Hilbert's classical example $441=9\cdot49=21^2$.

We also prove several complements. Very sparse data already force the conclusion of Theorem~\ref{thm:main}: the set of $n$ with $aT_n+b\in\cT$ is all of $\N$, or finite, or has only $O(\log N)$ elements up to $N$ (Theorem~\ref{thm:trichotomy}). We describe all polynomials $P$ with $P(\cT)\subseteq\cT$ (Theorem~\ref{thm:polymaps}); we show that in degree $d\ge3$ only translated monomials $\lambda(n-\nu)^d+\mu$ admit nontrivial affine self-maps (Theorem~\ref{thm:higher}); and we show that the affine self-maps of the set of sums of two triangular numbers are the maps $N\mapsto aN+\frac{a-1}{4}$ with $a\equiv1\pmod4$ a sum of two squares (Theorem~\ref{thm:two}), so that for two triangular numbers the multipliers are norms from $\Z[i]$ rather than squares.

\subsection*{Related work}
Identities between figurate numbers such as \eqref{eq:euler} are classical; see \cite[Ch.~I]{Dickson} and \cite{Deza}, and \cite{Heath} for Diophantus' treatise on polygonal numbers. We have not found a proof of the necessity part of Theorem~\ref{thm:main}, or of Theorems~\ref{thmA} and~\ref{thmB}, in the literature. Proposition~\ref{prop:rigidity} is an explicit quadratic instance of a general principle: if $f(n)$ is a value of the polynomial $g$ for all $n$ in a set that is not too sparse, then $f=g\circ h$ for some polynomial $h$. This is a consequence of Hilbert's irreducibility theorem, in the form that a thin set of integers has only $O(N^{1/2})$ elements in $[-N,N]$ \cite{SerreTopics,Schinzel}; see also \cite{DLS} for value-set problems of this kind. When the equation $f(x)=g(y)$ has infinitely many integer solutions is decided by the Bilu--Tichy theorem \cite{BiluTichy}; Theorem~\ref{thm:trichotomy} makes explicit, in the simplest case, the difference between ``infinitely many $x$'' and ``all $x$''. The equation $aP_s(n)=P_s(m)$, required for some rather than for all $n$, was studied by Chahal, Griffin and Priddis \cite{CGP} by means of Pell equations. The functional equations $P\circ f=g\circ h$ of Section~\ref{sec:polymaps} belong to Ritt's theory of polynomial decomposition \cite{Ritt}. For Krull monoids, arithmetical congruence monoids and Carlitz's theorem we refer to \cite{BaginskiChapman,BCCM,Carlitz,CLMZ,GHK}. The factorization properties of the monoids $H_M$ that enter Theorem~\ref{thmD} are known \cite{BaginskiChapman,HalterKoch,JamesNiven}; what is new there is the isomorphism between $H_{M_s}$ and the monoid of self-maps of $\cP_s$. Sets of integers generated by affine maps were studied by Klarner and Rado \cite{KlarnerRado}.

\subsection*{Organization and notation}
Section~\ref{sec:triangular} proves Theorem~\ref{thm:main}. Section~\ref{sec:rigidity} establishes the rigidity principle and its geometric interpretation. Sections~\ref{sec:polygonal} and~\ref{sec:between} prove Theorems~\ref{thmA} and~\ref{thmB}, Section~\ref{sec:further} contains the complements mentioned above, and Section~\ref{sec:general} studies the self-maps of arbitrary sets and proves Theorems~\ref{thmC} and~\ref{thmD}.

Throughout, $\N=\{1,2,3,\dots\}$, and $T_n=\frac{n(n+1)}{2}$ for all $n\in\Z$, so that $T_{-1-n}=T_n$. An affine map is a map $x\mapsto ax+b$ with $a,b\in\R$, and for $S\subseteq\R$ we write $aS+b=\{as+b:\ s\in S\}$.

\subsection*{Formal verification}
All results of this paper, together with the precise mathematical claims made in the remarks, examples, tables and figure captions, have been formalized and checked in the Lean~4 proof assistant with the Mathlib library, in 90 formal statements; the formal proofs depend only on the standard axioms of Lean. The claims that are not formalized are the facts quoted from the literature, such as Gauss's theorem on sums of three triangular numbers (Remark~\ref{rem:two}) and the theory of Krull monoids used in the first proof of Theorem~\ref{thm:factorization}. For Lemma~\ref{lem:squares} and Theorems~\ref{thm:higher} and~\ref{thm:factorization} the formalization follows more elementary routes than the proofs given here; for Theorem~\ref{thm:factorization} it is the argument given after its proof, which avoids Krull monoids. The source files are available as ancillary files with the arXiv version of this paper.

\section{Triangular numbers}\label{sec:triangular}

We use the criterion
\begin{equation}\label{eq:plutarch}
N\in\cT\iff N\ge1\ \text{ and }\ 8N+1\ \text{ is a perfect square}\qquad(N\in\Z).
\end{equation}
Indeed $8T_m+1=(2m+1)^2$; conversely, if $8N+1=k^2$ with $k\ge0$, then $k$ is odd, $N=T_{(k-1)/2}$, and $N\ge1$ forces $k\ge3$.

\begin{theorem}\label{thm:main}
Let $a,b\in\N$. Then $aT+b\in\cT$ for every $T\in\cT$ if and only if $a=(2c+1)^2$ and $b=T_c$ for some integer $c\ge1$. In particular, the retriangular numbers are the odd squares greater than $1$, and $b$ is determined by $a$.
\end{theorem}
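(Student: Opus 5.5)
Sufficiency is the identity \eqref{eq:euler}. For $c\ge1$ and $n\in\N$ we have $(2c+1)^2T_n+T_c=T_{(2c+1)n+c}$, and since $(2c+1)n+c\ge1$ this lies in $\cT$. The whole content of the statement is therefore the necessity direction.

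Assume $aT_n+b\in\cT$ for all $n\in\N$. By \eqref{eq:plutarch}, for each $n$ there is an integer $k_n\ge0$ with
\[
8(aT_n+b)+1=a(2n+1)^2+(8b+1-a)=k_n^2 .
\]
Set $m=2n+1$ and $d=8b+1-a$, so that $k_n^2-am^2=d$ for every odd $m\ge3$. First I show that $a$ is a perfect square. If it were not, then $\sqrt a$ is irrational and $k_n-\sqrt a\,m=d/(k_n+\sqrt a\,m)\to0$. Hence consecutive values give
\[
(k_{n+1}-k_n)-2\sqrt a\to0 .
\]
The left side is a sequence of integers converging to the irrational number $2\sqrt a$, which is a contradiction. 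So $a=x^2$ with $x\in\N$.

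Next I show that $d=0$. We have $(k_n-xm)(k_n+xm)=d$ with $k_n+xm\ge xm\to\infty$. If $d\neq0$, then $|k_n-xm|\ge1$ forces $|d|\ge xm$, which fails for large $m$. Thus $d=0$, that is, $8b+1=a=x^2$. This gives $x$ odd, say $x=2c+1$, and then $b=(x^2-1)/8=T_c$. Finally $b\ge1$ forces $c\ge1$, so $a>1$.

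I do not expect a real obstacle. The only delicate point is the first step, excluding non-square $a$. The limit argument above handles it, and alternatively the same conclusion follows from the Pell-equation structure of the solutions. Everything is stated for $n\in\N$, so infinitely many $m$ are available in both steps.
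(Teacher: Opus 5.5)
Your proof is correct and takes essentially the same route as the paper. Both arguments use the Pell-type relation $k_n^2=a(2n+1)^2+(8b+1-a)$ and the estimate $k_n-\sqrt a\,(2n+1)\to0$, and both use integrality of $k_{n+1}-k_n$ to force $a$ to be a square and then $8b+1=a$. The differences are cosmetic: you phrase the first step as a contradiction with the irrationality of $2\sqrt a$, where the paper concludes $2\sqrt a\in\Z$, and you eliminate $d$ with the size bound $|d|\ge x(2n+1)$, where the paper notes that the integer $k_n-x(2n+1)$ tends to $0$.
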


\begin{proof}
\emph{Sufficiency.} Let $a=(2c+1)^2$ and $b=T_c$. Since $8T_c+1=a$, for every $n\in\Z$
\[
8\,(aT_n+T_c)+1=a\,(8T_n+1)=\bigl((2c+1)(2n+1)\bigr)^2=\bigl(2[(2c+1)n+c]+1\bigr)^2,
\]
which is \eqref{eq:euler}; in particular $a\cT+b\subseteq\cT$.

\emph{Necessity.} Suppose that $a\cT+b\subseteq\cT$. For each $n\in\N$ choose $m_n\in\N$ with $aT_n+b=T_{m_n}$, and put
\[
u_n=2n+1,\qquad k_n=2m_n+1,\qquad h=8b+1-a.
\]
Multiplying by $8$ and adding $1$ gives
\begin{equation}\label{eq:pell}
k_n^2=a\,u_n^2+h\qquad(n\in\N).
\end{equation}
Since $k_n+\sqrt a\,u_n>u_n>2n$, \eqref{eq:pell} yields
\begin{equation}\label{eq:asymp}
k_n-\sqrt a\,u_n=\frac{h}{k_n+\sqrt a\,u_n}=O\!\left(\frac1n\right)\qquad(n\to\infty);
\end{equation}
indeed, the absolute value of the left-hand side is at most $\lvert h\rvert/(2n)$. Subtracting \eqref{eq:asymp} for $n$ from \eqref{eq:asymp} for $n+1$, and using $u_{n+1}-u_n=2$, we get
\[
k_{n+1}-k_n=2\sqrt a+O\!\left(\frac1n\right).
\]
The left-hand side is an integer, and a convergent sequence of integers is eventually constant; hence $2\sqrt a\in\Z$. Thus $4a$ is a perfect square, and therefore so is $a$: $a=x^2$ with $x\in\N$. Now $k_n-xu_n$ is an integer which tends to $0$ by \eqref{eq:asymp}, so $k_n=xu_n$ for all large $n$, and \eqref{eq:pell} gives $h=0$. Therefore $x^2=a=8b+1$ is odd, say $x=2c+1$, and $b=\frac{x^2-1}{8}=T_c$. Finally, $b\ge1$ forces $c\ge1$.
\end{proof}

The proof uses the hypothesis only for large $n$. A stronger form of Theorem~\ref{thm:main}, valid for real $a$ and $b$, is Corollary~\ref{cor:strong}, and Theorem~\ref{thm:trichotomy} shows that much sparser data suffice.

\section{A rigidity principle for quadratic sequences}\label{sec:rigidity}

The mechanism behind Theorem~\ref{thm:main} does not depend on the particular shape of $T_n$. We first record the general fact behind square-root arguments of this kind.

\begin{lemma}\label{lem:squares}
Let $F\in\R[x]$, and suppose that $F(n)=y_n^2$ with $y_n\in\Z$ for all sufficiently large $n\in\N$. Then $F=G^2$ for some $G\in\Q[x]$. If $F\in\Z[x]$, one may take $G\in\Z[x]$.
\end{lemma}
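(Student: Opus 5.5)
The plan is to show that $\lvert y_n\rvert$ agrees with a polynomial for all large $n$, by differencing enough times to reduce to a sequence of integers tending to $0$. This generalizes the argument in the proof of Theorem~\ref{thm:main}, where the integers $k_{n+1}-k_n$ converged. We may assume $F\neq0$. Replacing $y_n$ by $\lvert y_n\rvert$, we may assume $y_n\ge0$.

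Let $d=\deg F$. Since $F(n)\ge0$ for all large $n$, the leading coefficient $c$ of $F$ is positive. For large real $x$ write
\[
\sqrt{F(x)}=\sqrt c\,x^{d/2}\bigl(1+F_1(1/x)\bigr)^{1/2},
\]
where $F_1$ is a polynomial with $F_1(0)=0$. The binomial series then gives a convergent expansion
\[
\sqrt{F(x)}=\sum_{j\ge0}\beta_j x^{d/2-j}\qquad(x\ \text{large}),
\]
which may be differentiated termwise. Split it as $\sqrt{F(x)}=G_0(x)+E(x)$. Here $G_0$ collects the terms with exponent $d/2-j$ a nonnegative integer; there are none if $d$ is odd. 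So $G_0$ is a polynomial of degree at most $\lfloor d/2\rfloor$, and every exponent occurring in $E$ is negative or a half-integer at most $d/2$.

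Put $m=\lfloor d/2\rfloor+1$ and let $\Delta$ be the forward difference operator. Then $\Delta^m G_0=0$. By the mean value form of finite differences, $\Delta^m E(n)=E^{(m)}(\xi_n)$ for some $\xi_n\in[n,n+m]$. Each term of $E^{(m)}$ has exponent $d/2-j-m<0$, so $E^{(m)}(x)=O(x^{-1/2})$ by termwise differentiation of the tail. Hence $\Delta^m y_n=\Delta^m E(n)\to0$. Since these are integers, $\Delta^m y_n=0$ for all large $n$. Therefore $y_n=H(n)$ for all large $n$, where $H$ is a polynomial of degree less than $m$.

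Because $H$ takes integer values at consecutive integers, Lagrange interpolation gives $H\in\Q[x]$. Then $F(n)=H(n)^2$ for infinitely many $n$, so $F=H^2$, and we take $G=H$. In particular $d$ is even; the odd case is excluded automatically because then $2\deg H\ne d$.

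For the integrality statement, suppose $F\in\Z[x]$. Write $H=rP$ with $r\in\Q$ and $P\in\Z[x]$ primitive. By Gauss's lemma $P^2$ is primitive, and $r^2P^2=F\in\Z[x]$, so $r^2\in\Z$. Since $r$ is rational, $r\in\Z$, and hence $G=H\in\Z[x]$.

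The step I expect to need the most care is the analytic one: justifying the expansion of $\sqrt F$ and the estimate $E^{(m)}(x)=O(x^{-1/2})$ uniformly. I would handle it by bounding the tail $\sum_{j>J}\beta_jx^{d/2-j}$ and its derivatives through the geometric decay of the $\beta_j$ for $\lvert x\rvert$ beyond the radius of convergence.
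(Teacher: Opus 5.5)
Your proposal is correct and follows the paper's proof essentially step for step. Both arguments expand $\sqrt F$ at infinity, take $\lfloor d/2\rfloor+1$ finite differences via the mean value theorem, use that integers tending to $0$ eventually vanish, interpolate, and finish with Gauss's lemma. The one difference is that splitting off the polynomial part $G_0$ is harmless but unnecessary: the paper uses directly that $\bigl(\sqrt{F}\bigr)^{(m)}(x)=O(x^{d/2-m})\to0$ because $m>d/2$.
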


\begin{proof}
We may assume $F\ne0$, so that $F$ has degree $d\ge0$ and positive leading coefficient $\lambda$; replacing $y_n$ by $\lvert y_n\rvert$, we may assume $y_n\ge0$. For large real $x$ we have $\psi(x):=\sqrt{F(x)}=\sqrt\lambda\,x^{d/2}\bigl(1+\epsilon(x)\bigr)^{1/2}$, where $\epsilon(x)$ is a polynomial in $1/x$ without constant term. Hence $\psi(x)=\sqrt\lambda\,x^{d/2}\sum_{j\ge0}\gamma_jx^{-j}$, a convergent expansion that may be differentiated termwise, and $\psi^{(r)}(x)=O(x^{d/2-r})$ for every $r\ge0$. Take $r=\lfloor d/2\rfloor+1>d/2$. By the mean value theorem for finite differences, $\Delta^ry_n=\psi^{(r)}(\xi_n)$ for some $\xi_n\in(n,n+r)$, so $\Delta^ry_n\to0$. Being integers, $\Delta^ry_n=0$ for all $n\ge n_1$, so by Newton's interpolation formula there is $G\in\Q[x]$ of degree less than $r$ with $y_n=G(n)$ for $n\ge n_1$. Then $F(n)=G(n)^2$ for infinitely many $n$, whence $F=G^2$. If $F\in\Z[x]$, write $G=qG_0$ with $q\in\Q_{>0}$ and $G_0\in\Z[x]$ primitive. By Gauss's lemma $G_0^2$ is primitive, so $q^2$, the content of $F$, is an integer, and $q\in\Z$.
\end{proof}

\begin{proposition}\label{prop:rigidity}
Let
\[
f(x)=\lambda(x-\nu)^2+\mu,\qquad g(x)=\lambda'(x-\nu')^2+\mu',
\]
with $\lambda,\lambda'>0$ and $\nu,\nu',\mu,\mu'\in\R$, and let $a>0$ and $b$ be real. Suppose that for every sufficiently large $n\in\N$ there is $m\in\N$ with $af(n)+b=g(m)$. Then there exist $\rho\in\N$ and $\beta\in\Z$ with
\begin{equation}\label{eq:vertex}
a\lambda=\rho^2\lambda',\qquad \beta=\nu'-\rho\nu,\qquad a\mu+b=\mu',
\end{equation}
and consequently $af(x)+b=g(\rho x+\beta)$ identically. Conversely, if $\rho\in\N$, $\beta\in\Z$, $a$ and $b$ satisfy \eqref{eq:vertex}, then $af(n)+b=g(\rho n+\beta)$ for all $n$.
\end{proposition}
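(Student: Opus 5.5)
The strategy is to repeat the argument of Theorem~\ref{thm:main} in this general setting. From $af(n)+b=g(m_n)$ we obtain, for every large $n$,
\[
\lambda'(m_n-\nu')^2=a\lambda(n-\nu)^2+C,\qquad C:=a\mu+b-\mu'.
\]
Put $r=\sqrt{a\lambda/\lambda'}>0$. Since $m_n\to\infty$, the quantity $m_n-\nu'$ is eventually positive. It can then be written as
\[
m_n-\nu'=\sqrt{r^2(n-\nu)^2+C/\lambda'} .
\]
Arguing as in \eqref{eq:asymp}, this equals $r(n-\nu)+O(1/n)$.

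\emph{Step 1: $r$ is a positive integer.} Taking differences of the expansion above gives $m_{n+1}-m_n=r+O(1/n)$. The left-hand side is an integer, so the sequence of differences is eventually constant, and its limit $r$ is therefore a positive integer. Set $\rho=r$, which yields $a\lambda=\rho^2\lambda'$.

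\emph{Step 2: the translation.} The expansion now reads
\[
m_n-\rho n=\nu'-\rho\nu+O(1/n).
\]
The left-hand side is an integer, and an integer sequence converging to a limit is eventually equal to that limit. Hence $\beta:=\nu'-\rho\nu$ is an integer, and $m_n=\rho n+\beta$ for all large $n$.

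\emph{Step 3: the intercept.} Substituting $m_n=\rho n+\beta$, we have $m_n-\nu'=\rho(n-\nu)$. The displayed identity then reduces to $a\lambda(n-\nu)^2=a\lambda(n-\nu)^2+C$, so $C=0$, that is, $a\mu+b=\mu'$.

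\emph{Step 4: the identity and the converse.} The identity $af(x)+b=g(\rho x+\beta)$ is a direct expansion. Since $\rho x+\beta-\nu'=\rho(x-\nu)$,
\[
g(\rho x+\beta)=\lambda'\rho^2(x-\nu)^2+\mu'=a\lambda(x-\nu)^2+a\mu+b=af(x)+b .
\]
The converse is exactly this computation.

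The only delicate point is the converse's requirement that $m\in\N$. As stated, the converse asks only for the identity $af(n)+b=g(\rho n+\beta)$, which holds for all $n$ regardless, so nothing more is needed there. In the forward direction, the main care is in justifying the $O(1/n)$ expansion when $C<0$. For large $n$ the quantity $r^2(n-\nu)^2+C/\lambda'$ is positive and of size about $n^2$, so the bound $|C/\lambda'|/\bigl((m_n-\nu')+r(n-\nu)\bigr)=O(1/n)$ holds just as in \eqref{eq:asymp}. Alternatively, Lemma~\ref{lem:squares} could be applied, but it requires integrality that we lack for real coefficients, so the direct expansion is the route I would take.
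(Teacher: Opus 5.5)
Your proposal is correct and is essentially the paper's own proof: complete the square, expand $m_n-\nu'=\rho(n-\nu)+O(1/n)$, use integrality first of the eventually constant differences $m_{n+1}-m_n$ and then of $m_n-\rho n$, and conclude that $a\mu+b-\mu'=0$; the only cosmetic difference is that you bound the error term with the conjugate trick of \eqref{eq:asymp}, while the paper uses $\sqrt{1+\varepsilon}=1+O(\varepsilon)$. The step you state without comment, $m_n\to\infty$, follows at once from $g(m_n)=af(n)+b\to\infty$ and $m_n\in\N$.
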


\begin{proof}
For large $n$ pick $m_n\in\N$ with $af(n)+b=g(m_n)$. As $g(m_n)\to\infty$, also $m_n\to\infty$. Completing squares,
\[
\lambda'(m_n-\nu')^2=a\lambda(n-\nu)^2+\delta,\qquad \delta:=a\mu+b-\mu'.
\]
With $\rho_0=\sqrt{a\lambda/\lambda'}$ and $\sqrt{1+\varepsilon}=1+O(\varepsilon)$ this gives, for large $n$,
\[
m_n-\nu'=\rho_0(n-\nu)\sqrt{1+\frac{\delta}{a\lambda(n-\nu)^2}}=\rho_0(n-\nu)+O\!\left(\frac1n\right).
\]
Hence $m_{n+1}-m_n=\rho_0+O(1/n)$. Being integers, these differences are eventually equal to $\rho_0$, so $\rho:=\rho_0\in\N$. Then $m_n-\rho n$ is eventually a constant integer $\beta$, and $\beta-\nu'+\rho\nu=(m_n-\nu')-\rho(n-\nu)=O(1/n)$ forces $\beta=\nu'-\rho\nu$. Finally, $\lambda'(m_n-\nu')^2=\lambda'\rho^2(n-\nu)^2=a\lambda(n-\nu)^2$ for large $n$, so $\delta=0$. The identity $af(x)+b=g(\rho x+\beta)$ follows because $\rho x+\beta-\nu'=\rho(x-\nu)$, and the converse is the same computation.
\end{proof}

Applied to $f=g$ with $f(x)=\frac{x(x+1)}{2}=\frac12\bigl(x+\frac12\bigr)^2-\frac18$, Proposition~\ref{prop:rigidity} strengthens Theorem~\ref{thm:main}.

\begin{corollary}\label{cor:strong}
Let $a>0$ and $b\in\R$, and suppose that $aT+b\in\cT$ for all sufficiently large $T\in\cT$. Then $a=(2c+1)^2$ and $b=T_c$ for some integer $c\ge0$, and $aT_n+b=T_{(2c+1)n+c}$ for all $n\in\Z$.
\end{corollary}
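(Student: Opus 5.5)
We apply Proposition~\ref{prop:rigidity} with $f=g$, where
\[
f(x)=\tfrac{x(x+1)}{2}=\tfrac12\bigl(x+\tfrac12\bigr)^2-\tfrac18,
\]
so that $\lambda=\lambda'=\tfrac12$, $\nu=\nu'=-\tfrac12$ and $\mu=\mu'=-\tfrac18$. The hypothesis says that $aT+b\in\cT$ for all sufficiently large $T\in\cT$. Since $T_n\to\infty$, for every sufficiently large $n\in\N$ there is $m\in\N$ with $af(n)+b=f(m)=T_m$. This is exactly the hypothesis of the proposition, because membership in $\cT$ means being $T_m$ for some $m\in\N$.

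The proposition then produces $\rho\in\N$ and $\beta\in\Z$ satisfying \eqref{eq:vertex}. The first relation reads $a\cdot\tfrac12=\rho^2\cdot\tfrac12$, so $a=\rho^2$. The second reads $\beta=-\tfrac12+\tfrac{\rho}{2}=\tfrac{\rho-1}{2}$, and since $\beta\in\Z$, the integer $\rho$ must be odd. Write $\rho=2c+1$ with $c\ge0$; this is legitimate because $\rho\ge1$. Then $\beta=c$.

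The third relation gives
\[
b=\mu'-a\mu=-\tfrac18+\tfrac{(2c+1)^2}{8}=\tfrac{(2c+1)^2-1}{8}=\tfrac{c(c+1)}{2}=T_c .
\]
The identity $af(x)+b=g(\rho x+\beta)$ supplied by the proposition becomes $aT_n+b=T_{(2c+1)n+c}$ for all $n$, and in particular for all $n\in\Z$. Alternatively, this identity is just \eqref{eq:euler}.

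There is no real obstacle here. The only point needing care is the translation between "for all sufficiently large $T\in\cT$" and "for all sufficiently large $n$". This works because $n\mapsto T_n$ is increasing and unbounded on $\N$, and because the $m$ obtained lies in $\N$, as the proposition requires. The parity step, deducing that $\rho$ is odd from $\beta\in\Z$, is where the vertex condition does the work.
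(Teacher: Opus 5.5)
Your proof is correct and is the paper's argument: it applies Proposition~\ref{prop:rigidity} with $f=g=T$ and $\lambda=\lambda'=\tfrac12$, $\nu=\nu'=-\tfrac12$, $\mu=\mu'=-\tfrac18$. From this you read off $a=\rho^2$, then $\beta=\tfrac{\rho-1}{2}\in\Z$, which forces $\rho=2c+1$, and finally $b=\tfrac{\rho^2-1}{8}=T_c$. Your explicit check that ``sufficiently large $T\in\cT$'' becomes ``sufficiently large $n\in\N$'', using that $n\mapsto T_n$ is increasing and unbounded, is a detail the paper leaves implicit.
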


\begin{proof}
Here $\lambda=\lambda'=\frac12$, $\nu=\nu'=-\frac12$ and $\mu=\mu'=-\frac18$. Proposition~\ref{prop:rigidity} gives $a=\rho^2$ with $\rho\in\N$ and $\beta=\frac{\rho-1}{2}\in\Z$, so $\rho=2c+1$ is odd, and $b=-\frac18+\frac{\rho^2}{8}=T_c$.
\end{proof}

\subsection{Geometric interpretation}\label{sec:geometry}
Let $\Gamma_f$ and $\Gamma_g$ be the graphs of the polynomials $f$ and $g$ of Proposition~\ref{prop:rigidity}, and for $\rho\ne0$ and real $\beta,a,b$ consider the affine map
\[
\Lambda(x,y)=(\rho x+\beta,\ ay+b)
\]
of the plane. The image $\Lambda(\Gamma_f)$ is the graph of
\[
X\longmapsto a\,f\Bigl(\frac{X-\beta}{\rho}\Bigr)+b=\frac{a\lambda}{\rho^2}\,(X-\rho\nu-\beta)^2+a\mu+b,
\]
a parabola whose vertex is $\Lambda(\nu,\mu)$. Hence $\Lambda(\Gamma_f)=\Gamma_g$ if and only if $a\lambda=\rho^2\lambda'$ and $\Lambda$ maps the vertex $(\nu,\mu)$ of $\Gamma_f$ to the vertex $(\nu',\mu')$ of $\Gamma_g$, and for $\rho>0$ these are exactly the relations \eqref{eq:vertex}. Proposition~\ref{prop:rigidity} can therefore be read as follows: a non-constant affine map $N\mapsto aN+b$ carries $f(\N)$ into $g(\N)$ only if it is the vertical component of an affine map $\Lambda$ with $\Lambda(\Gamma_f)=\Gamma_g$ whose horizontal component $x\mapsto\rho x+\beta$ has $\rho\in\N$ and $\beta\in\Z$. Conversely, such a $\Lambda$ induces a map $f(\N)\to g(\N)$ as soon as, for each $n\in\N$, one of the numbers $\rho n+\beta$ and $2\nu'-\rho n-\beta$ lies in $\N$. In particular, the intercept $b=\mu'-a\mu$ is determined by the slope $a$.

When $f=g$, the map $\Lambda$ fixes the vertex and takes the form
\[
\Lambda(x,y)=\bigl(\nu+\rho(x-\nu),\ \mu+\rho^2(y-\mu)\bigr);
\]
it stretches horizontal distances from the axis of the parabola by the factor $\rho$ and vertical distances from the vertex by the factor $\rho^2$. The induced map on values is the homothety of ratio $\rho^2$ centred at the minimum $\mu$ of $f$ on $\R$. For triangular numbers we have $\nu=-\frac12$ and $\mu=-\frac18$, and the maps of Corollary~\ref{cor:strong} are the \emph{Euler maps}
\[
E_u(N)=u^2N+\frac{u^2-1}{8}\qquad(u\ \text{odd}):
\]
$E_u$ is the homothety of ratio $u^2$ centred at $-\frac18$, and on indices it is the homothety $n\mapsto un+\frac{u-1}{2}$ of ratio $u$ centred at $-\frac12$. Equivalently, $\kappa(N)=8N+1$ maps $\cT$ bijectively onto the odd squares greater than $1$ and satisfies $\kappa\circ E_u=u^2\kappa$. Figure~\ref{fig:euler} illustrates the map $E_3(T)=9T+1$, together with what goes wrong when the vertex is not fixed, and Figure~\ref{fig:pentagonal} shows a map between two different families.

\begin{figure}[t]
\centering
\begin{tikzpicture}
\begin{axis}[parabolaaxis]
  \fill[vizblue, fill opacity=0.10] (axis cs:-0.5,-0.125) rectangle (axis cs:4,10);
  \fill[black, fill opacity=0.12] (axis cs:-0.5,-0.125) rectangle (axis cs:1,1);
  \draw[black!35, densely dashed] (axis cs:-0.5,-1.4) -- (axis cs:-0.5,16.6);
  \draw[black!55] (axis cs:-0.5,1) -- (axis cs:1,1) -- (axis cs:1,-0.125);
  \draw[vizblue, densely dashed, semithick] (axis cs:-0.5,10) -- (axis cs:4,10) -- (axis cs:4,-0.125);
  \addplot[black, semithick, domain=-6.35:5.35, samples=180] {x*(x+1)/2};
  \addplot[latticepts] coordinates {(-6,15) (-5,10) (-4,6) (-3,3) (-2,1) (-1,0) (0,0) (1,1) (2,3) (3,6) (4,10) (5,15)};
  \addplot[only marks, mark=o, mark size=2.7pt, vizblue, semithick] coordinates {(4,10) (-5,10)};
  \draw[-{Stealth[length=4.5pt]}, vizblue, semithick, shorten <=3pt, shorten >=4pt] (axis cs:1,1) to[bend left=22] (axis cs:4,10);
  \draw[-{Stealth[length=4.5pt]}, vizblue, semithick, shorten <=3pt, shorten >=4pt] (axis cs:-2,1) to[bend right=22] (axis cs:-5,10);
  \addplot[only marks, mark=x, mark size=2.8pt, black, thick] coordinates {(-0.5,-0.125)};
  \node[font=\scriptsize, anchor=north east] at (axis cs:-0.6,-0.25) {$V$};
  \node[font=\scriptsize, anchor=north west] at (axis cs:1.05,0.95) {$P$};
  \node[font=\scriptsize, anchor=west] at (axis cs:4.25,10) {$P'$};
  \node[font=\scriptsize, anchor=north east] at (axis cs:-2.05,0.95) {$Q$};
  \node[font=\scriptsize, anchor=east] at (axis cs:-5.25,10) {$Q'$};
  \node[font=\small] at (rel axis cs:0.5,-0.13) {(a)};
\end{axis}
\end{tikzpicture}\hfill
\begin{tikzpicture}
\begin{axis}[parabolaaxis]
  \draw[black!35, densely dashed] (axis cs:-0.5,-1.4) -- (axis cs:-0.5,16.6);
  \addplot[black, semithick, domain=-6.35:5.35, samples=180] {x*(x+1)/2};
  \addplot[vizorange, semithick, densely dashed, domain=-5.75:4.75, samples=180] {x*(x+1)/2+3};
  \addplot[latticepts] coordinates {(-6,15) (-5,10) (-4,6) (-3,3) (-2,1) (-1,0) (0,0) (1,1) (2,3) (3,6) (4,10) (5,15)};
  \draw[vizorange, densely dotted, semithick] (axis cs:4,10) -- (axis cs:4,13);
  \draw[vizorange, densely dotted, semithick] (axis cs:-5,10) -- (axis cs:-5,13);
  \addplot[only marks, mark=o, mark size=2.4pt, vizorange, semithick] coordinates {(1,4) (4,13) (-2,4) (-5,13)};
  \draw[-{Stealth[length=4.5pt]}, vizorange, semithick, shorten <=3pt, shorten >=4pt] (axis cs:1,1) to[bend left=22] (axis cs:4,13);
  \draw[-{Stealth[length=4.5pt]}, vizorange, semithick, shorten <=3pt, shorten >=4pt] (axis cs:-2,1) to[bend right=22] (axis cs:-5,13);
  \draw[-{Stealth[length=4pt]}, vizorange, semithick, shorten <=4pt, shorten >=4pt] (axis cs:-0.5,-0.125) -- (axis cs:-0.5,2.875);
  \addplot[only marks, mark=x, mark size=2.8pt, black, thick] coordinates {(-0.5,-0.125)};
  \addplot[only marks, mark=x, mark size=2.8pt, vizorange, thick] coordinates {(-0.5,2.875)};
  \node[font=\scriptsize, anchor=north east] at (axis cs:-0.6,-0.25) {$V$};
  \node[font=\scriptsize, anchor=west] at (axis cs:-0.3,2.55) {$V'$};
  \node[font=\scriptsize, anchor=south east] (lab) at (axis cs:4.6,16.9) {$y=T(x)+3$};
  \draw[vizorange, semithick, densely dashed] ([xshift=-1pt]lab.west) -- ([xshift=-14pt]lab.west);
  \node[font=\small] at (rel axis cs:0.5,-0.13) {(b)};
\end{axis}
\end{tikzpicture}
\caption{(a)~The Euler map $E_3(T)=9T+1$ is the vertical component of the affine map $(x,y)\mapsto(3x+1,\,9y+1)$, which maps the parabola $y=T(x)=\frac{x(x+1)}{2}$ onto itself and fixes its vertex $V=(-\frac12,-\frac18)$; the dashed vertical line is the axis $x=-\frac12$. The map sends the lattice points $P=(1,1)$ and $Q=(-2,1)$ to $P'=(4,10)$ and $Q'=(-5,10)$, and the rectangle with opposite corners $V$ and $P$ (dark shading) to the rectangle with opposite corners $V$ and $P'$ (light shading), which is three times as wide and nine times as tall. (b)~With the same slope but the intercept $4$, the map $(x,y)\mapsto(3x+1,\,9y+4)$ moves the vertex to $V'=(-\frac12,\frac{23}{8})$ and maps the parabola onto its translate $y=T(x)+3$ (dashed). The images of the lattice points (circles) lie $3$ units above lattice points, and indeed $9T+4$ is never triangular.}
\label{fig:euler}
\end{figure}

\begin{figure}[t]
\centering
\begin{tikzpicture}
\begin{axis}[parabolaaxis, width=0.66\textwidth, height=0.54\textwidth,
    xmin=-6.0, xmax=6.6, xtick={-4,-2,0,2,4,6}]
  \addplot[black, semithick, domain=-5.9:5.28, samples=180] {x*(x+1)/2};
  \addplot[vizblue, semithick, domain=-3.12:3.46, samples=180] {(3*x^2-x)/2};
  \addplot[latticepts] coordinates {(-5,10) (-4,6) (-3,3) (-2,1) (-1,0) (0,0) (1,1) (2,3) (3,6) (4,10) (5,15)};
  \addplot[only marks, mark=square*, mark size=1.5pt, vizblue] coordinates {(-2,7) (-1,2) (0,0) (1,1) (2,5) (3,12)};
  \draw[-{Stealth[length=4.5pt]}, vizgray, semithick, shorten <=3pt, shorten >=3.5pt] (axis cs:-1,2) to[bend right=15] (axis cs:-4,6);
  \draw[-{Stealth[length=4.5pt]}, vizgray, semithick, shorten <=3pt, shorten >=3.5pt] (axis cs:1,1) to[bend right=25] (axis cs:2,3);
  \draw[-{Stealth[length=4.5pt]}, vizgray, semithick, shorten <=3pt, shorten >=3.5pt] (axis cs:2,5) -- (axis cs:5,15);
  \addplot[only marks, mark=x, mark size=2.8pt, black, thick] coordinates {(-0.5,-0.125)};
  \addplot[only marks, mark=x, mark size=2.8pt, vizblue, thick] coordinates {(0.1667,-0.0417)};
  \node[font=\scriptsize, anchor=north east] at (axis cs:-0.6,-0.3) {$V_3$};
  \node[font=\scriptsize, anchor=north west] at (axis cs:0.3,-0.3) {$V_5$};
  \node[font=\scriptsize, anchor=east] at (axis cs:-3.25,16.1) {$y=P_5(x)$};
  \node[font=\scriptsize, anchor=west] at (axis cs:5.35,16.4) {$y=T(x)$};
\end{axis}
\end{tikzpicture}
\caption{The affine map $(x,y)\mapsto(3x-1,\,3y)$ carries the parabola $y=P_5(x)=\frac{x(3x-1)}{2}$ (squares mark its lattice points) onto the parabola $y=T(x)$ (dots), and its vertex $V_5=(\frac16,-\frac1{24})$ onto $V_3=(-\frac12,-\frac18)$. It sends each lattice point $(n,P_5(n))$ to $(3n-1,T_{3n-1})$, so $3P_5(n)=T_{3n-1}$ for all $n$; the arrows join the points with $n=-1,1,2$ to their images, and $(0,0)$ is sent to $(-1,0)$.}
\label{fig:pentagonal}
\end{figure}

\section{Polygonal numbers}\label{sec:polygonal}

Recall that $P_s(x)=\frac{(s-2)x^2-(s-4)x}{2}$ and $\cP_s=P_s(\N)$ for $s\ge3$; thus $\cP_3=\cT$, $\cP_4$ is the set of squares, $\cP_5$ the set of pentagonal numbers, and so on. Diophantus' identity
\begin{equation}\label{eq:diophantus}
8(s-2)\,P_s(x)+(s-4)^2=\bigl(2(s-2)x-(s-4)\bigr)^2
\end{equation}
is the vertex form $P_s(x)=\lambda_s(x-\nu_s)^2+\mu_s$, with
\[
\lambda_s=\frac{s-2}{2},\qquad \nu_s=\frac{s-4}{2(s-2)}=\frac12-\frac1{s-2},\qquad \mu_s=-\frac{(s-4)^2}{8(s-2)}.
\]
Since $\gcd\bigl(s-4,2(s-2)\bigr)=\gcd(s-4,4)=\gcd(s,4)$, the reduced denominator of $\nu_s$ is
\[
M_s=\frac{2(s-2)}{\gcd(s,4)}=
\begin{cases}
2(s-2), & s\ \text{odd},\\
s-2, & s\equiv2\pmod4,\\
(s-2)/2, & s\equiv0\pmod4,
\end{cases}
\qquad\text{and}\qquad \nu_s=\frac{p_s}{M_s}\ \ \text{with}\ \ p_s=\frac{s-4}{\gcd(s,4)}.
\]
For $s=3,4,\dots,12$ one gets $M_s=2,1,6,4,10,3,14,8,18,5$. For $x\in\N$ put
\[
b_s(x)=\frac{(s-4)^2(x^2-1)}{8(s-2)}=(1-x^2)\mu_s,\qquad E_{s,x}(N)=x^2N+b_s(x),
\]
so that $E_{3,u}=E_u$ is the Euler map of Section~\ref{sec:geometry}.

\begin{theorem}\label{thm:polygonal}
Let $s\ge3$, $a>0$ and $b\in\R$, and suppose that $aP+b\in\cP_s$ for all sufficiently large $P\in\cP_s$. Then $a=x^2$ for some $x\in\N$ with $x\equiv1\pmod{M_s}$, and $b=b_s(x)$. Conversely, let $x\in\N$ with $x\equiv1\pmod{M_s}$, and write $x=1+jM_s$ with $j\ge0$. Then $b_s(x)=P_s(-jp_s)$,
\begin{equation}\label{eq:polyidentity}
x^2P_s(n)+b_s(x)=P_s(xn-jp_s)\qquad(n\in\Z),
\end{equation}
and $xn-jp_s\in\N$ for $n\in\N$; in particular $E_{s,x}(\cP_s)\subseteq\cP_s$.
\end{theorem}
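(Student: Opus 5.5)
The plan is to apply Proposition~\ref{prop:rigidity} with $f=g=P_s$, so $\lambda=\lambda'=\lambda_s$, $\nu=\nu'=\nu_s$, $\mu=\mu'=\mu_s$. The hypothesis says that for all sufficiently large $n\in\N$ there is $m\in\N$ with $aP_s(n)+b=P_s(m)$; this holds because $P_s(n)\to\infty$, so $P_s(n)$ is eventually large. The proposition then gives $\rho\in\N$ and $\beta\in\Z$ with $a=\rho^2$, $\beta=\nu_s-\rho\nu_s=(1-\rho)\nu_s$ and $a\mu_s+b=\mu_s$. Put $x=\rho$. The last relation gives $b=(1-x^2)\mu_s=b_s(x)$.

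For the congruence, note that $(1-x)p_s/M_s\in\Z$. Since $\gcd(p_s,M_s)=1$ (as $\nu_s=p_s/M_s$ is in lowest terms), we get $M_s\mid x-1$, that is, $x\equiv1\pmod{M_s}$.

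For the converse, write $x=1+jM_s$ and set $\beta=(1-x)\nu_s=-jM_s\cdot p_s/M_s=-jp_s\in\Z$. The relations \eqref{eq:vertex} then hold with $\rho=x$, $a=x^2$ and $b=b_s(x)$, so the converse half of Proposition~\ref{prop:rigidity} yields \eqref{eq:polyidentity} for all $n\in\Z$. Evaluating at $n=0$ gives $b_s(x)=P_s(-jp_s)$, since $P_s(0)=0$.

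The remaining point, and the only mildly delicate one, is positivity: $xn-jp_s\in\N$ for $n\in\N$. Since $x\ge1$, it suffices to check $n=1$, that is, $1+jM_s-jp_s=1+j(M_s-p_s)\ge1$. This requires $M_s\ge p_s$, which follows from $\nu_s=\tfrac12-\tfrac1{s-2}<1$ together with $M_s>0$. Indeed, this gives $p_s<M_s$, so $xn-jp_s\ge 1$. Thus $E_{s,x}(\cP_s)\subseteq\cP_s$.

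I expect no real obstacle. The only things to check carefully are the coprimality of $p_s$ and $M_s$, which is immediate from the definition of $M_s$ as the reduced denominator, and the sign check $p_s<M_s$ used for $n\ge1$.
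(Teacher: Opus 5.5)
Your proposal is correct and follows the paper's proof essentially step for step. You apply Proposition~\ref{prop:rigidity} with $f=g=P_s$ to get $a=\rho^2$, $b=(1-a)\mu_s$ and $(1-\rho)\nu_s\in\Z$, hence $M_s\mid\rho-1$; the converse half plus evaluation at $n=0$ gives the identity, and your positivity check $x-jp_s=1+j(M_s-p_s)\ge1$ is exactly the paper's $x+\beta=1+(x-1)(1-\nu_s)\ge1$, written in terms of $p_s$ and $M_s$.
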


\begin{proof}
Apply Proposition~\ref{prop:rigidity} with $f=g=P_s$. It gives $a=\rho^2$ with $\rho\in\N$, $\beta=(1-\rho)\nu_s\in\Z$, i.e.\ $M_s\mid\rho-1$, and $b=(1-a)\mu_s=b_s(\rho)$. Conversely, if $x=1+jM_s$ then $\beta=-jM_s\nu_s=-jp_s\in\Z$, and Proposition~\ref{prop:rigidity} gives $x^2P_s(n)+b_s(x)=P_s(xn+\beta)$ for all $n$; at $n=0$ this reads $b_s(x)=P_s(-jp_s)$, which yields \eqref{eq:polyidentity}. For $n\ge1$ the index satisfies $xn+\beta\ge x+\beta=1+(x-1)(1-\nu_s)\ge1$ because $\nu_s<1$.
\end{proof}

Theorem~\ref{thmA} is the case of Theorem~\ref{thm:polygonal} in which the hypothesis holds for all $P\in\cP_s$. Call $a\in\N$ \emph{re-$s$-gonal} if $a\cP_s+b\subseteq\cP_s$ for some $b\in\N$.

\begin{corollary}\label{cor:resgonal}
For $s\ne4$, the re-$s$-gonal numbers are the squares $x^2$ with $x>1$ and $x\equiv1\pmod{M_s}$, and the corresponding $b$ is unique. There are no re-square numbers.
\end{corollary}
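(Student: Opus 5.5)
The plan is to read off Corollary~\ref{cor:resgonal} directly from Theorem~\ref{thm:polygonal}, which already describes every pair $(a,b)$ with $a>0$, $b\in\R$ and $a\cP_s+b\subseteq\cP_s$. Such pairs are exactly $a=x^2$, $b=b_s(x)$ with $x\in\N$ and $x\equiv1\pmod{M_s}$. The hypothesis there only asks for large $P$, so it covers our situation. The only remaining work is to decide when $b_s(x)$ is a positive integer. By definition, $a$ is re-$s$-gonal exactly when some such $x$ has $b_s(x)\in\N$. Uniqueness of $b$ then comes for free, since $b=b_s(x)$ with $x=\sqrt a$ is forced by $a$.

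\emph{Case $s=4$.} Here $\mu_4=0$, so $b_4(x)=0$ for every $x$. Hence no $b\in\N$ is admissible, and there are no re-square numbers.

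\emph{Case $s\ne4$.} Integrality is automatic: Theorem~\ref{thm:polygonal} gives $b_s(x)=P_s(-jp_s)$ with $x=1+jM_s$, which is an integer because $P_s$ takes integer values on $\Z$ (as $(s-2)n^2-(s-4)n\equiv n^2-n\equiv 0\pmod2$ up to parity bookkeeping).

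Positivity comes from the formula $b_s(x)=(1-x^2)\mu_s$ together with $\mu_s=-\frac{(s-4)^2}{8(s-2)}<0$ for $s\ne4$. Thus $b_s(x)>0$ if and only if $x>1$, while $x=1$ gives $b=0\notin\N$. So the re-$s$-gonal numbers are precisely the $x^2$ with $x>1$ and $x\equiv1\pmod{M_s}$. Such $x$ exist, for instance $x=1+M_s$, although existence is not needed for the characterization.

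There is no real obstacle. The only point needing care is the integrality of $b_s(x)$, and the identity $b_s(x)=P_s(-jp_s)$ from Theorem~\ref{thm:polygonal} settles it.
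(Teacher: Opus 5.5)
Your proposal is correct and follows the paper's argument. Theorem~\ref{thm:polygonal} forces $a=x^2$ with $x\equiv1\pmod{M_s}$ and $b=b_s(x)=(x^2-1)\lvert\mu_s\rvert$, which for $s\ne4$ is positive exactly when $x>1$, and which vanishes identically for $s=4$ because $\mu_4=0$; your check that $b_s(x)=P_s(-jp_s)$ is an integer is a detail the paper leaves implicit, and you handle it correctly.
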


\begin{proof}
For $s\ne4$ we have $\mu_s<0$, so $b_s(x)=(x^2-1)\lvert\mu_s\rvert$ is positive exactly when $x>1$. For $s=4$ we have $\mu_4=0$, so $b=0$ is forced.
\end{proof}

\begin{table}[ht]
\centering
\renewcommand{\arraystretch}{1.15}
\begin{tabular}{@{}cccl@{}}
\toprule
$s$ & $M_s$ & re-$s$-gonal numbers $a=x^2$ & smallest identity\\
\midrule
3 & 2 & $x$ odd: $9,25,49,\dots$ & $9T_n+1=T_{3n+1}$\\
4 & 1 & none ($b=0$ is forced) & $4P_4(n)=P_4(2n)$\\
5 & 6 & $x\equiv1\ (6)$: $49,169,361,\dots$ & $49P_5(n)+2=P_5(7n-1)$\\
6 & 4 & $x\equiv1\ (4)$: $25,81,169,\dots$ & $25P_6(n)+3=P_6(5n-1)$\\
7 & 10 & $x\equiv1\ (10)$: $121,441,\dots$ & $121P_7(n)+27=P_7(11n-3)$\\
8 & 3 & $x\equiv1\ (3)$: $16,49,100,\dots$ & $16P_8(n)+5=P_8(4n-1)$\\
9 & 14 & $x\equiv1\ (14)$: $225,841,\dots$ & $225P_9(n)+100=P_9(15n-5)$\\
10 & 8 & $x\equiv1\ (8)$: $81,289,\dots$ & $81P_{10}(n)+45=P_{10}(9n-3)$\\
11 & 18 & $x\equiv1\ (18)$: $361,1369,\dots$ & $361P_{11}(n)+245=P_{11}(19n-7)$\\
12 & 5 & $x\equiv1\ (5)$: $36,121,256,\dots$ & $36P_{12}(n)+28=P_{12}(6n-2)$\\
\bottomrule
\end{tabular}
\caption{Re-$s$-gonal numbers and the smallest nontrivial self-map of $\cP_s$ (Theorem~\ref{thm:polygonal}).}
\label{tab:self}
\end{table}

\begin{remark}\label{rem:poly}
Even squares occur exactly when $M_s$ is odd, that is, when $s\equiv0\pmod4$; for example $16P_8(n)+5=P_8(4n-1)$. Since the index maps $n\mapsto xn-jp_s$ compose, the self-maps satisfy $E_{s,x}\circ E_{s,y}=E_{s,xy}$; the resulting monoid is studied in Section~\ref{sec:general}.
\end{remark}

The same method applies to the \emph{generalized} $s$-gonal numbers $P_s(\Z)$, for instance the generalized pentagonal numbers $0,1,2,5,7,12,15,\dots$ of Euler's pentagonal number theorem.

\begin{proposition}\label{prop:generalized}
Let $s\ge3$, $a>0$ and $b\in\R$. Then $aP_s(\Z)+b\subseteq P_s(\Z)$ if and only if $a=x^2$ for some $x\in\N$ with $x\equiv\pm1\pmod{M_s}$, and $b=b_s(x)$.
\end{proposition}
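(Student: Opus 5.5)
We adapt the proof of Theorem~\ref{thm:polygonal}, replacing the domain $\N$ by $\Z$ and using the symmetry $P_s(2\nu_s-x)=P_s(x)$ of the parabola about its axis.

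\emph{Necessity.} Suppose $aP_s(\Z)+b\subseteq P_s(\Z)$. Then for every $n\in\N$ there is $m\in\Z$ with $aP_s(n)+b=P_s(m)$. Proposition~\ref{prop:rigidity} requires $m\in\N$, so we first reduce to that case. For large $n$ the value $P_s(m)$ is large, so $|m|$ is large. If $m$ is negative, we replace it by the reflected index $2\nu_s-m$. This need not be an integer, however, so instead we apply Proposition~\ref{prop:rigidity} to the pair $f=P_s$ and $g(x)=P_s(-x)$. Note that $g(x)=\lambda_s(x+\nu_s)^2+\mu_s$, and $P_s(\Z)=P_s(\N_0)\cup g(\N)$.

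To handle the mixing of signs, we rerun the argument of the proof of Proposition~\ref{prop:rigidity} with $\pm$. Completing squares gives
\[
m_n-\nu_s=\pm\rho_0(n-\nu_s)+O(1/n).
\]
For large $n$ the sign is determined, since consecutive $n$ give $m_{n+1}-m_n$ close to $\pm\rho_0$, or else to a difference of size about $2\rho_0 n$. So it suffices to pass to an infinite set of $n$ of one sign.

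A cleaner route is the following. The set of $n$ with $m_n\ge0$, or the set with $m_n<0$, contains infinitely many consecutive pairs. Hence one of the two sign-choices holds along infinitely many pairs $(n,n+1)$, and the difference argument of Proposition~\ref{prop:rigidity} works along those pairs. It gives $\rho_0=\rho\in\N$ and $\delta=0$, so $a=\rho^2$ and $b=(1-a)\mu_s=b_s(\rho)$. Moreover $m_n\mp\rho n$ is eventually constant along such pairs, equal to $\beta$.

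The congruence condition then comes from integrality of the index map. In the $+$ case, $\beta=\nu_s-\rho\nu_s=(1-\rho)\nu_s\in\Z$ gives $M_s\mid\rho-1$. In the $-$ case, $m=-\rho n+\beta$ with $\beta=\nu_s+\rho\nu_s$, so $M_s\mid\rho+1$. Either way $x=\rho\equiv\pm1\pmod{M_s}$.

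The main obstacle is this sign bookkeeping. The hypothesis is pointwise, and the sign of $m_n$ may vary with $n$, so it must be controlled. My plan is to observe that $\delta$ and $\rho_0$ do not depend on the sign. The quantity $(m_n-\nu_s)^2$ equals $\rho_0^2(n-\nu_s)^2+\delta/\lambda_s$. Its square root, in absolute value, gives $|m_n-\nu_s|=\rho_0(n-\nu_s)+O(1/n)$, and $|m_n-\nu_s|$ equals either $m_n-\nu_s$ or $\nu_s-m_n$. This value lies in $\Z-\nu_s$ or in $\nu_s-\Z$, a union of two cosets of $\Z$. Successive differences of $|m_n-\nu_s|$ then lie in $\Z\cup(\Z\pm2\nu_s)$, a finite union of cosets, and they converge to $\rho_0$, so they are eventually constant. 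Infinitely many of them fall in one coset. From this, $\rho_0$ and the condition $\delta=0$ follow as before.

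\emph{Sufficiency.} If $x\equiv1\pmod{M_s}$, then \eqref{eq:polyidentity} holds for all $n\in\Z$ with an integer index. If $x\equiv-1\pmod{M_s}$, we set $y=-x$, so that $y\equiv1\pmod{M_s}$. The identity $a f(n)+b=g(\rho n+\beta)$ in Proposition~\ref{prop:rigidity} is a polynomial identity, valid for $\rho=y<0$ by the same computation: $\beta=(1-y)\nu_s=(1+x)\nu_s\in\Z$. This gives $x^2P_s(n)+b_s(x)=P_s(-xn+\beta)$ with $-xn+\beta\in\Z$. Hence $E_{s,x}(P_s(\Z))\subseteq P_s(\Z)$.
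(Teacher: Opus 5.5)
\textbf{The gap.} You assert that $\rho_0=\sqrt a\in\N$ follows ``as before'', but it does not. In Proposition~\ref{prop:rigidity} the differences $m_{n+1}-m_n$ are integers. Your differences $d_n=\lvert m_{n+1}-\nu_s\rvert-\lvert m_n-\nu_s\rvert$ lie only in $\Z\cup(\Z+2\nu_s)\cup(\Z-2\nu_s)$. Their eventual constancy therefore shows that $\rho_0$ lies in one of these three cosets, not that $\rho_0\in\Z$. (It does correctly give $\delta=0$.)

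This omission matters. Suppose the sign of $m_n-\nu_s$ eventually alternates. Then $d_n$ lies alternately in $\Z+2\nu_s$ and $\Z-2\nu_s$. For $s=6$ we have $2\nu_6=\frac12$, so your argument leaves $\rho_0\in\Z+\frac12$ open.

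The missing ingredient is the integrality of the slope, which the paper records first. Since $b=aP_s(0)+b$ and $a+b=aP_s(1)+b$ both lie in $P_s(\Z)\subseteq\Z$, we get $a\in\Z$. Your coset argument gives $\sqrt a\in\Q$, and together these give $\sqrt a\in\N$.

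\textbf{The sign split.} Your division into a ``$+$ case'' and a ``$-$ case'' is also unjustified. The claim that $\{n:m_n\ge0\}$ or $\{n:m_n<0\}$ contains infinitely many consecutive pairs is false: for $s=3$ one may pick $m_n$ and $-1-m_n$ alternately.

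No sign control is needed, though. Once $\delta=0$ and $\rho=\sqrt a\in\N$ are known, every solution satisfies $m_n=\nu_s\pm\rho(n-\nu_s)$. Integrality of $m_n$ for a single $n$ then gives $(\rho\mp1)\nu_s\in\Z$, that is, $\rho\equiv\pm1\pmod{M_s}$. The paper does exactly this at $n=0$.

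\textbf{Comparison.} With these repairs your argument becomes a hand-made version of the paper's. The paper applies Lemma~\ref{lem:squares} to $F(n)=8(s-2)\bigl(aP_s(n)+b\bigr)+(s-4)^2$. This is the square of the integer $2(s-2)m_n-(s-4)$ whatever the sign of $m_n$, so the sign bookkeeping never arises. Your quantity $\lvert m_n-\nu_s\rvert$ is just the nonnegative choice $y_n$ in that lemma, divided by $2(s-2)$.

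Your sufficiency argument, which uses $\rho=-x$ in the polynomial identity, is correct.
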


\begin{proof}
Since $P_s(0)=0$ and $P_s(1)=1$, both $b$ and $a+b$ lie in $P_s(\Z)\subseteq\Z$, so $a\in\Z$. Put $w_n=2(s-2)n-(s-4)$ and $F(n)=8(s-2)\bigl(aP_s(n)+b\bigr)+(s-4)^2$. By \eqref{eq:diophantus}, $F(n)$ is a perfect square for every $n\in\Z$, and $F(n)=aw_n^2+\delta$ with $\delta=8(s-2)b-(a-1)(s-4)^2$. By Lemma~\ref{lem:squares}, $F=G^2$ for some $G\in\Q[x]$, necessarily of the form $G(n)=\alpha w_n+\gamma$; comparing coefficients gives $\alpha^2=a$, $\gamma=0$ and $\delta=0$. Hence $\sqrt a\in\Q$, so $a=x^2$ with $x\in\N$, and $b=b_s(x)$. Then $aP_s(n)+b=\lambda_sx^2(n-\nu_s)^2+\mu_s$, and this equals $P_s(m)$ exactly when $m-\nu_s=\pm x(n-\nu_s)$, that is, $m\in\{xn-(x-1)\nu_s,\ -xn+(x+1)\nu_s\}$. For $n=0$ one of these numbers must be an integer, so $(x-1)\nu_s\in\Z$ or $(x+1)\nu_s\in\Z$, i.e.\ $x\equiv1$ or $x\equiv-1\pmod{M_s}$. Conversely, either condition makes the corresponding value of $m$ an integer for every $n\in\Z$.
\end{proof}

For the generalized pentagonal numbers, $24G+1=(6n-1)^2$ and $M_5=6$, so Proposition~\ref{prop:generalized} gives the maps $G\mapsto x^2G+\frac{x^2-1}{24}$ for every $x$ prime to $6$, such as $25G+1$ and $49G+2$; only those with $x\equiv1\pmod6$ preserve the ordinary pentagonal numbers.

The self-maps of Theorem~\ref{thm:polygonal} also \emph{reflect} membership in $\cP_s$.

\begin{proposition}\label{prop:reflection}
Let $s\ge3$ and $x\in\N$ with $x\equiv1\pmod{M_s}$. For $N\in\N$ we have $N\in\cP_s$ if and only if $E_{s,x}(N)\in\cP_s$. In particular, $N$ is triangular if and only if $9N+1$ is triangular.
\end{proposition}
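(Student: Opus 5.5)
The forward direction is the last part of Theorem~\ref{thm:polygonal}, so only the converse needs work: if $N\in\N$ and $E_{s,x}(N)\in\cP_s$, then $N\in\cP_s$. I would use the vertex form. Put $a=x^2$, $b=b_s(x)=(1-x^2)\mu_s$. Suppose $E_{s,x}(N)=P_s(m)$ with $m\in\N$. Write $P_s(y)=\lambda_s(y-\nu_s)^2+\mu_s$. Then
\[
\lambda_s(m-\nu_s)^2+\mu_s=x^2N+(1-x^2)\mu_s,
\]
so
\[
\lambda_s(m-\nu_s)^2=x^2(N-\mu_s).
\]
Setting $n:=\nu_s+(m-\nu_s)/x$, a real number, we get $\lambda_s(n-\nu_s)^2+\mu_s=N$, that is, $P_s(n)=N$. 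It remains to show that $n\in\N$.

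\textbf{Integrality of $n$.} We have $xn=m+(x-1)\nu_s$. Write $x=1+jM_s$, so that $(x-1)\nu_s=jp_s\in\Z$; hence $xn=m+jp_s\in\Z$. We also need $n$ to be an integer, and this is the main obstacle. Since $x\equiv1\pmod{M_s}$, multiplying by $M_s$ gives
\[
M_s n=M_s\nu_s+(m-\nu_s)M_s/x=p_s+(M_sm-p_s)/x.
\]
So the question is whether $x\mid M_sm-p_s$.

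I would use the quadratic equation directly. Diophantus' identity \eqref{eq:diophantus} gives
\[
\bigl(2(s-2)m-(s-4)\bigr)^2=x^2\bigl(8(s-2)N+(s-4)^2\bigr).
\]
Hence $8(s-2)N+(s-4)^2$ is the square of a rational number $r=\bigl(2(s-2)m-(s-4)\bigr)/x$. Since it is an integer, $r$ is an integer, and $r=2(s-2)n-(s-4)$. Thus $2(s-2)n\in\Z$, and so $M_s n\in\Z$ after dividing by $\gcd(s,4)$ appropriately. Now $xn$ and $M_sn$ are both integers, and $\gcd(x,M_s)=1$ because $x\equiv1\pmod{M_s}$. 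By Bézout, $n\in\Z$. (The step from $2(s-2)n\in\Z$ to $M_sn\in\Z$ needs care. The cleaner route is to note that $n$ has denominator dividing $x$ from $xn\in\Z$, and denominator dividing $2(s-2)$ from $r\in\Z$. These are coprime once one checks $\gcd(x,2(s-2))=1$. That may fail when $s\equiv0\pmod4$ and $x$ is even, so I would instead argue with $M_s$ directly via $r\equiv -(s-4)\pmod{2(s-2)}$, i.e.\ $r\equiv -(s-4)\pmod{\gcd(s,4)\,M_s}$ reduced by $\gcd(s,4)$.)

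\textbf{Positivity and sign.} Once $n\in\Z$ with $P_s(n)=N\ge1$, I check the sign. Since $m\ge1>\nu_s$, we have $n>\nu_s$. Then $n\ge1$ when $\nu_s\ge0$. When $\nu_s<0$, i.e.\ $s=3$ with $\nu_3=-\tfrac12$, an integer $n>\nu_s$ is $\ge0$, and $n=0$ gives $P_s(0)=0\ne N$. In either case $n\in\N$, so $N\in\cP_s$.

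The triangular example is $s=3$, $x=3\equiv1\pmod2$, where $E_{3,3}(N)=9N+1$.
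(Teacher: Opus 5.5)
Your setup matches the paper's: your $r=\bigl(2(s-2)m-(s-4)\bigr)/x$ is the paper's $v=w/x$. The deductions $r\in\Z$ and $xn=m+jp_s\in\Z$ are correct, and so is the sign argument at the end. The gap is exactly at the step you flagged.

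From $r=2(s-2)n-(s-4)$ you only get $2(s-2)n\in\Z$, and ``so $M_sn\in\Z$ after dividing by $\gcd(s,4)$ appropriately'' does not follow. Indeed $M_sn=(r+s-4)/\gcd(s,4)$ is an integer only if $\gcd(s,4)\mid r$, and you never establish this. Your proposed repair, arguing via $r\equiv-(s-4)\pmod{2(s-2)}$, is circular, because that congruence is equivalent to $n\in\Z$.

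Since $\gcd(x,M_s)=1$, your denominator argument shows only that the denominator of $n$ divides $\gcd(x,2(s-2))=\gcd(x,\gcd(s,4))$. This is $1$ when $s$ is odd, and also when $s\equiv2\pmod4$, because then $4\mid M_s$ forces $x$ odd. But for $s\equiv0\pmod4$ and $x$ even it leaves $n\in\frac14\Z$ open. For instance, for $s=8$, $x=4$ (the map $16N+5$) one gets $n=(m+1)/4$, and nothing you have written shows $4\mid m+1$.

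The missing ingredient is the 2-adic observation that the paper records as the claim $\gcd(s,4)\mid v$. It comes from $r^2=8(s-2)N+(s-4)^2$:
\begin{itemize}
\item if $s\equiv2\pmod4$, then $r^2$ is even, so $r$ is even;
\item if $s\equiv0\pmod4$, then $16\mid8(s-2)$ (as $s-2\equiv2\pmod4$) and $16\mid(s-4)^2$, so $16\mid r^2$ and $4\mid r$.
\end{itemize}
Since also $\gcd(s,4)\mid s-4$, this gives $M_sn\in\Z$, and your B\'ezout step with $\gcd(x,M_s)=1$ then yields $n\in\Z$. Alternatively, you can combine $n\in\frac14\Z$ with $P_s(n)=N\in\Z$: for $4\mid s$ the coefficient $\frac{s-2}{2}$ is odd and $\frac{s-4}{2}$ is even, so a non-integral dyadic $n$ gives $P_s(n)$ negative 2-adic valuation.

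The paper packages the same step as the congruence $v\equiv xv=w\equiv-(s-4)\pmod{2(s-2)}$. That congruence needs $jM_sv\equiv0\pmod{2(s-2)}$, which is again $\gcd(s,4)\mid v$. With this inserted, your argument is correct and is essentially the paper's proof.
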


\begin{proof}
One direction is Theorem~\ref{thm:polygonal}. Conversely, suppose that $x^2N+b_s(x)=P_s(m)$ with $m\in\N$. Put $D=2(s-2)$, $r=s-4$ and $w=Dm-r$, so that $w\ge D-r=s>0$. Since $8(s-2)b_s(x)=r^2(x^2-1)$, identity \eqref{eq:diophantus} gives
\[
w^2=8(s-2)\bigl(x^2N+b_s(x)\bigr)+r^2=x^2\bigl(8(s-2)N+r^2\bigr).
\]
Hence $8(s-2)N+r^2=v^2$ with $v=w/x$; as $v$ is a positive rational number whose square is an integer, $v\in\N$. We claim that $\gcd(s,4)\mid v$. This is clear if $s$ is odd. If $s\equiv2\pmod4$, then $r$ is even, so $v^2$ and hence $v$ are even. If $s\equiv0\pmod4$, then $4\mid r$ and $16\mid8(s-2)$, so $16\mid v^2$ and $4\mid v$. Writing $x=1+jM_s$ and using $D=\gcd(s,4)M_s$, we obtain $jM_sv\equiv0\pmod D$, hence $v\equiv xv=w\equiv-r\pmod D$. Thus $v=Dn-r$ for some $n\in\Z$, and $n\ge1$ because $v^2>r^2$ forces $v>\lvert r\rvert$. Now \eqref{eq:diophantus} gives $8(s-2)N+r^2=(Dn-r)^2=8(s-2)P_s(n)+r^2$, so $N=P_s(n)\in\cP_s$.
\end{proof}

\section{Maps between polygonal families}\label{sec:between}

\begin{theorem}\label{thm:between}
Let $s,t\ge3$, $a>0$ and $b\in\R$. Then $a\cP_s+b\subseteq\cP_t$ if and only if there is $\rho\in\N$ such that
\[
\beta:=\nu_t-\rho\,\nu_s\in\Z,\qquad a=\rho^2\,\frac{t-2}{s-2},\qquad b=P_t(\beta).
\]
In that case $aP_s(n)+b=P_t(\rho n+\beta)$ for all $n\in\Z$, and automatically $a\in\N$ and $b\in\Z_{\ge0}$. Moreover:
\begin{enumerate}[label=(\roman*),leftmargin=*]
\item an admissible $\rho$ exists if and only if $M_t\mid M_s$, and then the admissible values of $\rho$ are exactly the positive integers in a single residue class modulo $M_s$;
\item composing maps $\cP_s\to\cP_t\to\cP_u$ multiplies the parameters $\rho$.
\end{enumerate}
\end{theorem}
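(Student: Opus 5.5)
The plan is to apply Proposition~\ref{prop:rigidity} with $f=P_s$ and $g=P_t$. In vertex form the data are $\lambda=\lambda_s$, $\nu=\nu_s$, $\mu=\mu_s$ and $\lambda'=\lambda_t$, $\nu'=\nu_t$, $\mu'=\mu_t$.

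\emph{Characterization.} If $a\cP_s+b\subseteq\cP_t$, then for every $n\in\N$ there is $m\in\N$ with $aP_s(n)+b=P_t(m)$. The proposition then yields $\rho\in\N$ with $a\lambda_s=\rho^2\lambda_t$, which is $a=\rho^2(t-2)/(s-2)$. It also gives $\beta=\nu_t-\rho\nu_s\in\Z$ and the identity $aP_s(x)+b=P_t(\rho x+\beta)$. Evaluating at $x=0$ gives $b=P_t(\beta)$.

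Conversely, suppose such $\rho$ and $\beta$ exist and $a,b$ are defined as in the statement. Then $a\mu_s+b=\mu_t$, because both sides equal $a P_s(0)+b$ computed via the identity. The converse part of Proposition~\ref{prop:rigidity} therefore gives $aP_s(n)+b=P_t(\rho n+\beta)$ for all $n$.

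It remains to check that $\rho n+\beta\in\N$ for $n\ge1$. Since $\beta$ is an integer, it suffices to show $\rho+\beta\ge1$. We have $\rho+\beta=\rho(1-\nu_s)+\nu_t$. Now $1-\nu_s=\frac12+\frac1{s-2}>\frac12$ and $\nu_t\ge\nu_3=-\frac12$, so $\rho+\beta>\rho/2-\frac12\ge0$, giving $\rho+\beta\ge1$.

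Integrality comes for free. Since $a=aP_s(1)+b-b=P_t(\rho+\beta)-P_t(\beta)$ and $P_t$ takes integer values on $\Z$, we get $a\in\Z$, hence $a\in\N$. Also $b=P_t(\beta)\ge0$, because $P_t$ is nonnegative on $\Z$: its minimum over integers is attained at $0$ or $1$, as $0\le\nu_t<1$ or $\nu_t\in[-\frac12,0)$.

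\emph{Part (i).} Write $\nu_s=p_s/M_s$ with $\gcd(p_s,M_s)=1$, and similarly $\nu_t=p_t/M_t$. The condition is $\rho p_s/M_s\equiv p_t/M_t\pmod 1$. The left side has denominator dividing $M_s$. If this congruence holds, the denominator $M_t$ of $\nu_t$ must divide $M_s$. Conversely, if $M_t\mid M_s$, the condition becomes $\rho p_s\equiv p_t(M_s/M_t)\pmod{M_s}$. Since $p_s$ is invertible modulo $M_s$, this has solutions forming exactly one residue class modulo $M_s$, and it contains positive integers.

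\emph{Part (ii).} Composing $n\mapsto\rho n+\beta$ with $n\mapsto\rho'n+\beta'$ gives index slope $\rho'\rho$. Equivalently, the vertical slopes multiply: $\rho^2\frac{t-2}{s-2}\cdot\rho'^2\frac{u-2}{t-2}$.

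The main subtlety is the positivity of the index, i.e.\ that $\rho n+\beta$ lands in $\N$ rather than merely in $\Z$; the bound $\nu_t\ge-\frac12$ together with $\nu_s<\frac12+$ handles it.
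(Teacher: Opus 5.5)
Your proposal is correct and follows the paper's proof step for step. It applies Proposition~\ref{prop:rigidity} with $f=P_s$, $g=P_t$, evaluates at $x=0$ and $x=1$ to get $b=P_t(\beta)$ and $a\in\N$, uses the bound $\rho+\beta>\rho/2-\frac12\ge0$ for the index, and uses the invertibility of $p_s$ modulo $M_s$ for (i). The one step stated loosely is the converse check $a\mu_s+b=\mu_t$. As written (``both sides equal $aP_s(0)+b$'') it is not literally right, and it should read $b=P_t(\beta)=\lambda_t\rho^2\nu_s^2+\mu_t=a\lambda_s\nu_s^2+\mu_t=\mu_t-a\mu_s$, using $\beta-\nu_t=-\rho\nu_s$ and $P_s(0)=\lambda_s\nu_s^2+\mu_s=0$, exactly as the paper does.
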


\begin{proof}
Proposition~\ref{prop:rigidity} with $f=P_s$ and $g=P_t$ gives $a\lambda_s=\rho^2\lambda_t$, i.e.\ $a=\rho^2\frac{t-2}{s-2}$, together with $\beta=\nu_t-\rho\nu_s\in\Z$ and the identity $aP_s(x)+b=P_t(\rho x+\beta)$. Evaluating at $x=0$ and $x=1$ gives $b=P_t(\beta)$ and $a=P_t(\rho+\beta)-P_t(\beta)$, both integers. The roots of $P_t$ are $0$ and $2\nu_t\in[-1,1)$, and no integer lies strictly between them, so $P_t(\beta)\ge0$. Conversely, given such $\rho$, the relations \eqref{eq:vertex} hold for $f=P_s$ and $g=P_t$: indeed $a\lambda_s=\rho^2\lambda_t$, and since $\lambda_s\nu_s^2+\mu_s=P_s(0)=0$, we get $b=P_t(\beta)=\lambda_t\rho^2\nu_s^2+\mu_t=\mu_t-a\mu_s$. Hence $aP_s(n)+b=P_t(\rho n+\beta)$ for all $n$ by Proposition~\ref{prop:rigidity}, and for $n\ge1$ the index satisfies $\rho n+\beta\ge\rho(1-\nu_s)+\nu_t>\frac{\rho}{2}-\frac12\ge0$, because $1-\nu_s>\frac12$ and $\nu_t\ge-\frac12$; so $\rho n+\beta\in\N$.

(i) The condition $\beta\in\Z$ says that $\rho\nu_s\equiv\nu_t\pmod 1$. Since $\gcd(p_s,M_s)=1$, the map $\rho\mapsto\rho\nu_s$ induces a bijection $\Z/M_s\Z\to\frac1{M_s}\Z/\Z$. Hence a solution exists if and only if $\nu_t\in\frac1{M_s}\Z$, that is, $M_t\mid M_s$, and the solutions form one residue class modulo $M_s$. (ii) follows from $\rho'(\rho n+\beta)+\beta'=\rho'\rho\,n+(\rho'\beta+\beta')$.
\end{proof}

\begin{corollary}\label{cor:divisibility}
The map $s\mapsto M_s$ is a bijection from $\{3,4,5,\dots\}$ onto $\N$, with inverse
\[
M\longmapsto
\begin{cases}
2M+2, & M\ \text{odd},\\
M/2+2, & M\equiv2\pmod4,\\
M+2, & M\equiv0\pmod4,
\end{cases}
\]
and there is a non-constant affine map sending $\cP_s$ into $\cP_t$ if and only if $M_t\mid M_s$. In particular:
\begin{enumerate}[label=(\alph*),leftmargin=*]
\item every family maps into the squares, and the map with the smallest $\rho$, namely $\rho=M_s$, is $\frac{2M_s^2}{s-2}\,P_s(n)+p_s^2=(M_sn-p_s)^2$; for odd $s$ this is Diophantus' identity \eqref{eq:diophantus};
\item the squares map only into the squares;
\item $\cP_s$ maps into $\cT$ if and only if $M_s$ is even, that is, $s\not\equiv0\pmod4$;
\item $\cP_s$ and $\cP_t$ map into each other only when $s=t$.
\end{enumerate}
\end{corollary}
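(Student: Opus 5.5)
The plan has three parts: check the bijection, deduce the divisibility criterion from Theorem~\ref{thm:between}(i), and read off (a)--(d) as special cases of $M_t\mid M_s$.

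\textbf{Bijection.} I would use the three-case formula for $M_s$ from Section~\ref{sec:polygonal} and sort $s\ge3$ by its class modulo $4$.
\begin{itemize}
\item If $s$ is odd, $M_s=2(s-2)$ runs over $2\cdot\{1,3,5,\dots\}$, i.e.\ exactly the $M\equiv2\pmod4$.
\item If $s\equiv2\pmod4$, then $s-2$ runs over $4,8,12,\dots$, so $M_s=s-2$ gives exactly the positive multiples of $4$.
\item If $s\equiv0\pmod4$, then $(s-2)/2$ runs over $1,3,5,\dots$, so $M_s$ gives exactly the odd positive integers.
\end{itemize}
These three images partition $\N$, and each piece is hit injectively. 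Inverting in each case gives $s=M/2+2$, $s=M+2$ and $s=2M+2$ respectively, which is the stated inverse.

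\textbf{Embedding criterion.} A non-constant affine map has $a\neq0$. If $a>0$, Theorem~\ref{thm:between}(i) gives the criterion directly. To rule out $a<0$: then $aP+b\to-\infty$ as $P$ ranges over $\cP_s$, contradicting $\cP_t\subseteq\N$. This is the only point needing an extra remark.

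\textbf{Parts (a)--(d).}

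(a) Since $M_4=1$ divides every $M_s$, every family maps into the squares. Here $\nu_4=0$, so $\beta=-\rho\nu_s=-\rho p_s/M_s$. The least admissible $\rho$ is therefore $\rho=M_s$, giving $\beta=-p_s$ and $a=\rho^2\cdot 2/(s-2)$. Then $b=P_4(-p_s)=p_s^2$, and the identity follows from Theorem~\ref{thm:between}. For odd $s$ we have $M_s=2(s-2)$ and $p_s=s-4$, so $a=8(s-2)$, which is \eqref{eq:diophantus}.

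(b) $M_t\mid 1$ forces $M_t=1$, so $t=4$ by the bijection.

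(c) $M_3=2$, so $\cP_s$ maps into $\cT$ iff $2\mid M_s$. From the case list, $M_s$ is odd exactly when $s\equiv0\pmod4$.

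(d) Mutual divisibility gives $M_s=M_t$, hence $s=t$ by injectivity.

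The only mildly delicate step is confirming that the case analysis covers every residue class exactly once, which the partition of $\N$ above settles.
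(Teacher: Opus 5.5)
Your proposal is correct and follows the paper's proof essentially step for step. It uses the case split on $s \bmod 4$ for the bijection, then the observation that a non-constant map from $\cP_s$ into $\cP_t$ must have positive slope so that Theorem~\ref{thm:between}(i) applies, and finally reads off (a)--(d) from $M_4=1$, $M_3=2$ and injectivity. In (a), the step ``$\beta=-\rho p_s/M_s\in\Z$ iff $M_s\mid\rho$'' silently uses that $\nu_s=p_s/M_s$ is in lowest terms, which holds because $M_s$ was defined as the reduced denominator of $\nu_s$.
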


\begin{proof}
If $s$ is odd, then $M_s=2(s-2)\equiv2\pmod4$; if $s\equiv2\pmod4$, then $M_s=s-2\equiv0\pmod4$; and if $s\equiv0\pmod4$, then $M_s=\frac{s-2}{2}$ is odd. In each of the three cases $s$ is recovered from $M_s$ by the stated formula, and every $M$ in the corresponding class arises, so $s\mapsto M_s$ is bijective. The second statement is Theorem~\ref{thm:between}(i), because a non-constant affine map sending $\cP_s$ into $\cP_t$ has positive slope ($\cP_s$ is unbounded above and $\cP_t$ is bounded below). For (a), $M_4=1$, and for $t=4$ the condition $\beta=-\rho\nu_s\in\Z$ means $M_s\mid\rho$; the formula follows from $a=\frac{2\rho^2}{s-2}$ and $b=P_4(-p_s)=p_s^2$. Items (b)--(d) follow from $M_t\mid1\iff t=4$, from $M_3=2$, and from the injectivity of $s\mapsto M_s$.
\end{proof}

\begin{corollary}\label{cor:inclusion}
The only inclusion $\cP_s\subsetneq\cP_t$ between polygonal families is $\cP_6\subset\cP_3$: hexagonal numbers are triangular, $P_6(n)=T_{2n-1}$.
\end{corollary}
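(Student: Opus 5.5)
The plan is to reduce an inclusion $\cP_s\subseteq\cP_t$ to the affine map with $a=1$, $b=0$, and then read off the constraints from Theorem~\ref{thm:between}. Such an inclusion says that the identity map sends $\cP_s$ into $\cP_t$. By Theorem~\ref{thm:between} there is then a $\rho\in\N$ with
\[
1=\rho^2\,\frac{t-2}{s-2},\qquad \beta=\nu_t-\rho\nu_s\in\Z,\qquad 0=P_t(\beta).
\]
In particular $M_t\mid M_s$ by part~(i).

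The first condition gives $s-2=\rho^2(t-2)$, so $s\ge t$. If $\rho=1$, then $s=t$, which is excluded for a proper inclusion. So assume $\rho\ge2$.

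The condition $P_t(\beta)=0$ means that $\beta$ is a root of $P_t$, namely $\beta=0$ or $\beta=2\nu_t$. Since $2\nu_t=1-\frac{2}{t-2}$ is an integer only for $t\in\{3,4\}$ (it equals $-1$ and $0$ respectively), this is a short list. I will handle each case as follows.
\begin{enumerate}[label=(\arabic*),leftmargin=*]
\item If $\beta=0$, then $\nu_t=\rho\nu_s$. With $\nu_s=\frac12-\frac{1}{s-2}$, $\nu_t=\frac12-\frac1{t-2}$ and $s-2=\rho^2(t-2)$, this becomes
\[
\frac12-\frac1{t-2}=\frac\rho2-\frac{1}{\rho(t-2)}.
\]
Multiplying by $2\rho(t-2)$ and rearranging gives $(\rho-1)\bigl[\rho(t-2)+2\bigr]=0$. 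Since $\rho\ge2$ and $t\ge3$, the bracket is positive, so this case is impossible.
\item If $\beta=2\nu_t=-1$, then $t=3$ and $\nu_3=-\frac12$, so $\rho\nu_s=\frac12$. With $s-2=\rho^2$ this reads $\frac\rho2-\frac1\rho=\frac12$, i.e.\ $\rho^2-\rho-2=0$. Hence $\rho=2$, $s=6$.
\item If $t=4$, then $2\nu_t=0$, which brings us back to $\beta=0$, already excluded.
\end{enumerate}
So the only candidate is $(s,t)=(6,3)$ with $\rho=2$ and $\beta=-1$.

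Theorem~\ref{thm:between} then yields the identity $P_6(n)=T_{2n-1}$, so $\cP_6\subseteq\cP_3$. The inclusion is strict, for example $3\in\cT$ but $3\notin\cP_6=\{1,6,15,\dots\}$.

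I expect no real obstacle. The one delicate point is to use the full conditions of Theorem~\ref{thm:between}, in particular $b=P_t(\beta)=0$, rather than only the divisibility $M_t\mid M_s$, since divisibility alone does not force an inclusion.
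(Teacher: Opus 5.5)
Your proposal is correct and matches the paper's proof essentially step for step. Both specialize Theorem~\ref{thm:between} to $a=1$, $b=0$ and split on $\beta\in\{0,2\nu_t\}$; the case $\beta=0$ gives $(\rho-1)\bigl(\rho(t-2)+2\bigr)=0$, and the case $\beta=2\nu_t=-1$ gives $\rho=2$, $s=6$. Your explicit check that the inclusion is strict ($3\in\cT$ but $3\notin\cP_6$) is a small addition that the paper leaves implicit.
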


\begin{proof}
An inclusion is the case $a=1$, $b=0$ of Theorem~\ref{thm:between}: then $\rho^2(t-2)=s-2$ and $P_t(\beta)=0$, i.e.\ $\beta\in\{0,2\nu_t\}$. If $\beta=0$, then $\rho\nu_s=\nu_t$, i.e.\ $\rho\bigl(\frac12-\frac1{\rho^2(t-2)}\bigr)=\frac12-\frac1{t-2}$, which simplifies to $(\rho-1)\bigl(\rho(t-2)+2\bigr)=0$; so $\rho=1$ and $s=t$. If $\beta=2\nu_t\ne0$ is an integer, then $t=3$ and $\beta=-1$, so $\rho\nu_s=\frac12$ with $s-2=\rho^2$; this reads $\frac\rho2-\frac1\rho=\frac12$, whence $\rho=2$ and $s=6$.
\end{proof}

\begin{table}[ht]
\centering
\renewcommand{\arraystretch}{1.15}
\begin{tabular}{@{}lccl@{}}
\toprule
identity & $s\to t$ & $\rho$ & comment\\
\midrule
$8T_n+1=(2n+1)^2$ & $3\to4$ & $2$ & Plutarch\\
$P_6(n)=T_{2n-1}$ & $6\to3$ & $2$ & hexagonal $\subset$ triangular\\
$3P_5(n)=T_{3n-1}$ & $5\to3$ & $3$ & Figure~\ref{fig:pentagonal}\\
$24P_5(n)+1=(6n-1)^2$ & $5\to4$ & $6$ & Diophantus, $s=5$\\
$8P_5(n)=P_8(2n)$ & $5\to8$ & $2$ & \\
$3P_8(n)+1=(3n-1)^2$ & $8\to4$ & $3$ & \\
$5P_7(n)+1=T_{5n-2}$ & $7\to3$ & $5$ & \\
$128P_7(n)+28=P_{12}(8n-2)$ & $7\to12$ & $8$ & target with larger $s$\\
$25P_{14}(n)+26=P_5(10n-4)$ & $14\to5$ & $10$ & \\
\bottomrule
\end{tabular}
\caption{Some affine maps between polygonal families (Theorem~\ref{thm:between}); in each case $\rho$ is the smallest admissible value.}
\label{tab:between}
\end{table}

\begin{table}[ht]
\centering
\renewcommand{\arraystretch}{1.1}
\begin{tabular}{@{}ccl@{\qquad}ccl@{}}
\toprule
$s$ & $M_s$ & targets $t$ & $s$ & $M_s$ & targets $t$\\
\midrule
3 & 2 & 3, 4 & 9 & 14 & 3, 4, 9, 16\\
4 & 1 & 4 & 10 & 8 & 3, 4, 6, 10\\
5 & 6 & 3, 4, 5, 8 & 11 & 18 & 3, 4, 5, 8, 11, 20\\
6 & 4 & 3, 4, 6 & 12 & 5 & 4, 12\\
7 & 10 & 3, 4, 7, 12 & 13 & 22 & 3, 4, 13, 24\\
8 & 3 & 4, 8 & 14 & 12 & 3, 4, 5, 6, 8, 14\\
\bottomrule
\end{tabular}
\caption{All $t$ such that $\cP_s$ maps affinely into $\cP_t$ (Corollary~\ref{cor:divisibility}).}
\label{tab:targets}
\end{table}

\section{Complements}\label{sec:further}

\subsection{Sparse data suffice}\label{sec:sparse}
For $a,b\in\N$ let $\cN(a,b)=\{n\in\N:\ aT_n+b\in\cT\}$.

\begin{theorem}\label{thm:trichotomy}
Let $a,b\in\N$. Exactly one of the following holds:
\begin{enumerate}[label=(\roman*),leftmargin=*]
\item $a=(2c+1)^2$ and $b=T_c$, and then $\cN(a,b)=\N$;
\item $a$ is a square but \textup{(i)} fails, and then $\cN(a,b)$ is finite;
\item $a$ is not a square, and then $\lvert\cN(a,b)\cap[1,N]\rvert=O_{a,b}(\log N)$; moreover, $\cN(a,b)$ is either empty or infinite.
\end{enumerate}
In particular, if $\limsup_{N\to\infty}\lvert\cN(a,b)\cap[1,N]\rvert/\log N=\infty$ (for instance, if $\cN(a,b)$ has positive upper density or contains an infinite arithmetic progression), then $a=(2c+1)^2$ and $b=T_c$.
\end{theorem}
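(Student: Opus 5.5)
The starting point is the reformulation \eqref{eq:pell} from the proof of Theorem~\ref{thm:main}. For $n\in\cN(a,b)$ we have $k^2=a\,u_n^2+h$, where $u_n=2n+1$, $k=2m+1$ and $h=8b+1-a$. So $\cN(a,b)$ is essentially the set of $n$ for which the binary quadratic equation $k^2-a u^2=h$ has a solution with $u=2n+1$ and $k$ odd, $k\ge3$. Note that $k$ is automatically odd: $au^2+h\equiv a+1-a\equiv1\pmod 8$ because $u$ is odd. Also $k\ge3$ holds automatically once $n\ge1$, since $aT_n+b\ge2$. Hence $n\in\cN(a,b)$ exactly when $a(2n+1)^2+h$ is a perfect square. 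The three cases are exhaustive and mutually exclusive: (i) and (ii) require $a$ to be a square and (iii) requires it not to be, while (ii) is by definition the complement of (i) among squares. So I will treat each case separately.

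Case (i) is the sufficiency part of Theorem~\ref{thm:main}.

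For case (ii), write $a=x^2$. Then $k^2-(xu)^2=h$ factors as $(k-xu)(k+xu)=h$. If $h\ne0$, then $k+xu\le|h|$, which bounds $u$, so $\cN$ is finite. If $h=0$, then $8b+1=x^2$, so $x$ is odd, say $x=2c+1$, and $b=T_c$; this is case (i). So when (i) fails, $h\ne0$ and $\cN$ is finite.

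For case (iii), $a$ is not a square and $h\ne0$ is possible. (If $h=0$, then $a u^2$ would be a square, which forces $a$ to be a square, so $\cN=\emptyset$.) With $h\ne0$ I use the classical structure of solutions of generalized Pell equations. Let $\varepsilon=k_0+u_0\sqrt a>1$ be the fundamental unit of norm $1$ in $\Z[\sqrt a]$. Every solution $k+u\sqrt a$ of $k^2-au^2=h$ with $k,u>0$ can be written as $\pm\alpha\,\varepsilon^j$ with $j\ge0$, where $\alpha$ ranges over finitely many representatives satisfying $|\alpha|\le\sqrt{|h|}\,\varepsilon$. This is the standard reduction: multiply a solution by $\varepsilon^{-1}$ until it lands in a fundamental domain. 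For each class, $u_j\asymp\varepsilon^j$, so the number of solutions with $u\le 2N+1$ is at most $C(\log N+1)$. This gives the bound $O_{a,b}(\log N)$.

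For the dichotomy "empty or infinite", suppose a solution $(k,u)$ exists with $u=2n+1$ odd and $k$ odd. I need infinitely many further solutions that are again odd in both coordinates. Consider multiplication by $\eta=\varepsilon^r$ for a suitable $r$ such that $\eta\equiv1$ modulo $2$ in $\Z[\sqrt a]$, that is, $\eta=K+U\sqrt a$ with $K$ odd and $U$ even. Such an $r$ exists because the unit group of the finite ring $\Z[\sqrt a]/2$ is finite, so some power of $\varepsilon$ is $\equiv1$. Then $(k+u\sqrt a)\eta^j$ has coordinates $\equiv(k,u)\pmod 2$, so both stay odd, and they are positive and increasing in $j$. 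Each such $u'$ gives $n'=(u'-1)/2\in\cN(a,b)$, so $\cN$ is infinite.

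The final "in particular" follows by contraposition. If the $\limsup$ is infinite, then (iii) fails by its bound and (ii) fails because $\cN$ would be finite, so (i) holds. An infinite arithmetic progression, or positive upper density, gives $\gg N$ elements up to $N$, which forces this.

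The main obstacle is making the $O(\log N)$ count rigorous and self-contained: showing that there are finitely many classes under multiplication by $\varepsilon$, each contributing geometrically growing $u$. I would prove it directly. Given a solution $\xi=k+u\sqrt a$ with $k,u>0$, choose $j$ with $1\le\xi\varepsilon^{-j}<\varepsilon$. Then $\xi\varepsilon^{-j}$ and its conjugate $h/(\xi\varepsilon^{-j})$ are both bounded in absolute value by constants depending on $a$ and $h$. This gives finitely many possibilities for $\xi\varepsilon^{-j}$, and $u\ge c\,\varepsilon^{j}$ then bounds $j\le\log_\varepsilon(2N+1)+O(1)$.
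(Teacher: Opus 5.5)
Your proposal is correct and follows essentially the same route as the paper: the reduction to $k^2=a(2n+1)^2+h$, the factorization $(k-xu)(k+xu)=h$ when $a=x^2$, the finitely-many-classes structure of the generalized Pell equation for the $O(\log N)$ bound, and multiplication by a power $\varepsilon^r\equiv1\pmod{2\Z[\sqrt a]}$ to produce infinitely many solutions with odd coordinates. The only difference is that you prove the finiteness of the classes directly, by normalizing $\xi\varepsilon^{-j}$ into $[1,\varepsilon)$, whereas the paper cites this classical result.
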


\begin{proof}
Put $h=8b+1-a$. Then $n\in\cN(a,b)$ if and only if $a(2n+1)^2+h$ is a perfect square; its square root is then automatically odd and at least $3$. If $a=x^2$ and $h\ne0$, the equation $(k-xu)(k+xu)=h$ has only finitely many solutions. If $a=x^2$ and $h=0$, then $x^2=8b+1$ and we are in case (i). If $a$ is not a square, then $h\ne0$ unless $\cN(a,b)=\emptyset$, and we are counting solutions of the generalized Pell equation $k^2-au^2=h$ with $u=2n+1$ \cite{NZM}. Its solutions with $k,u>0$ fall into finitely many classes $\{(k_i+u_i\sqrt a)\,\varepsilon^j:\ j\in\Z\}$, where $\varepsilon>1$ generates the group of units of norm $1$ of $\Z[\sqrt a]$ modulo $\pm1$. Within each class the numbers $k+u\sqrt a$ grow geometrically with ratio $\varepsilon$, so only $O(\log N)$ of them have $u\le 2N+1$. Finally, suppose that $(k_0,u_0)$ is a solution with $u_0=2n_0+1$. Choose $r\ge1$ with $\varepsilon^r\equiv1\pmod{2\Z[\sqrt a]}$, which is possible because $\varepsilon$ is a unit of the finite ring $\Z[\sqrt a]/2\Z[\sqrt a]$. Multiplying $k_0+u_0\sqrt a$ by $\varepsilon^{rj}$, $j=1,2,\dots$, preserves the parities of $k$ and $u$ and produces infinitely many elements of $\cN(a,b)$.
\end{proof}

For example, the sets
\[
\cN(2,1)=\{1,4,9,26,55,154,323,900,\dots\},\qquad \cN(3,1)=\{2,9,35,132,494,\dots\}
\]
grow geometrically, while $\cN(4,3)=\{2\}$, since $4\cdot3+3=15$, and $\cN(5,2)=\emptyset$.

\begin{remark}\label{rem:sparse-general}
The same method applies to any two families. Let $s,t\ge3$, let $a,b\in\Q$ with $a>0$, and let $\cN=\{n\in\N:\ aP_s(n)+b\in\cP_t\}$. If $aP_s(n)+b=P_t(m)$, then by \eqref{eq:diophantus} the numbers $v=2(t-2)m-(t-4)$ and $w=2(s-2)n-(s-4)$ satisfy $v^2=\theta^2w^2+C$ with $\theta^2=\frac{a(t-2)}{s-2}$ and a constant $C$ depending only on $a,b,s,t$. If $\theta^2$ is not the square of a rational number, the Pell argument above gives $\lvert\cN\cap[1,N]\rvert=O(\log N)$; if $\theta\in\Q$ and $C\ne0$, then $\cN$ is finite; and if $\theta\in\Q$ and $C=0$, then $v=\theta w$ is an affine function of $n$ with rational coefficients, and $n\in\cN$ if and only if the corresponding number $m=\frac{v+t-4}{2(t-2)}$ is a positive integer; apart from finitely many $n$, this means that $n$ lies in a certain union of residue classes. Such residue classes do occur when $s\ne t$: $T_n\in\cP_6$ if and only if $n$ is odd.
\end{remark}

\subsection{Polynomial maps}\label{sec:polymaps}
Let $\iota(x)=-1-x$, so that $T_{\iota(x)}=T_x$.

\begin{theorem}\label{thm:polymaps}
\begin{enumerate}[label=(\alph*),leftmargin=*]
\item For every $g\in\Q[x]$ with $g\circ\iota=g$ or $g\circ\iota=\iota\circ g$ there is a unique $P_g\in\Q[x]$ with $P_g(T_x)=T_{g(x)}$, and $P_g(\cT)\subseteq\cT$ whenever $g(\N)\subseteq\Z\setminus\{0,-1\}$.
\item Conversely, every $P\in\Q[x]$ with $P(\cT)\subseteq\cT$ is equal to $P_g$ for such a $g$, which is unique up to replacing $g$ by $\iota\circ g$.
\item $P_g\circ P_{g'}=P_{g\circ g'}$.
\end{enumerate}
\end{theorem}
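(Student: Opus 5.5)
For part (a), I would use the substitution $w=2x+1$, under which $8T_x+1=w^2$ and $\iota$ becomes $w\mapsto-w$. A polynomial $g$ with $g\circ\iota=g$ corresponds to $\tilde g(w)=2g\bigl(\frac{w-1}{2}\bigr)+1$ with $\tilde g(-w)-1$ even in $w$; a polynomial with $g\circ\iota=\iota\circ g$ corresponds to $\tilde g$ odd. In both cases $\tilde g(w)^2$ is an even polynomial in $w$, hence a polynomial $R(w^2)$ with rational coefficients. Then $T_{g(x)}=\frac{\tilde g(w)^2-1}{8}=\frac{R(8T_x+1)-1}{8}$, so I would set $P_g(y)=\frac{R(8y+1)-1}{8}$. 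This polynomial is unique because $\cT$ is infinite. If $g(\N)\subseteq\Z\setminus\{0,-1\}$, then $T_{g(n)}$ is a positive triangular number, since $T_m\ge1$ exactly for integers $m\notin\{0,-1\}$. Part (c) is formal: $P_g(P_{g'}(T_x))=P_g(T_{g'(x)})=T_{g(g'(x))}$, and the composite $g\circ g'$ again satisfies one of the two symmetry conditions, the types multiplying like signs. Uniqueness then gives $P_g\circ P_{g'}=P_{g\circ g'}$.

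For part (b), let $P\in\Q[x]$ with $P(\cT)\subseteq\cT$, and assume $P$ is non-constant. A constant $P=T_m$ is covered by $g\equiv m$. Put $F(u)=8P\bigl(\frac{u^2-1}{8}\bigr)+1\in\Q[u]$, an even polynomial. For every odd $u=2n+1$ with $n\in\N$, $F(u)$ is a perfect square, by \eqref{eq:plutarch}. I would apply Lemma~\ref{lem:squares} to the polynomial $n\mapsto F(2n+1)$, which takes square integer values for all $n\in\N$. This gives $F(2x+1)=G(x)^2$ with $G\in\Q[x]$, so $F(w)=H(w)^2$ with $H(w)=G\bigl(\frac{w-1}{2}\bigr)$.

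Since $F$ is even, $H(-w)^2=H(w)^2$, so $H(-w)=\pm H(w)$: $H$ is either even or odd. Define $g(x)=\frac{H(2x+1)-1}{2}$. Then $T_{g(x)}=\frac{H(w)^2-1}{8}=P(T_x)$. If $H$ is odd, then $g\circ\iota=\iota\circ g$. If $H$ is even, then $g\circ\iota=g$. So $P=P_g$. Replacing $H$ by $-H$ replaces $g$ by $\iota\circ g$, and these are the only square roots of $F$, which gives uniqueness up to $\iota$.

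The main obstacle is applying Lemma~\ref{lem:squares} correctly. Its hypothesis needs integer square roots $y_n$ at all large integers $n$, not at odd $u$, which is why I substitute $u=2n+1$ first. I also need to track rationality: $G\in\Q[x]$ follows from the lemma, but $F$ may have non-integral rational coefficients, so only the $\Q$ version of the lemma is available. Fortunately, (b) only requires $g\in\Q[x]$, so this should suffice.
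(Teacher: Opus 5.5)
Your route is the paper's. Your $F(2x+1)$ is exactly the paper's $8P(T_x)+1$, and Lemma~\ref{lem:squares} is applied in the same way. Your even/odd dichotomy for $H$ is the paper's $G\circ\iota=\pm G$. Part (a) via even polynomials in $w=2x+1$ and part (c) also match the paper, and all of your computations are correct.

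There is one omission in (b), and it comes from your closing remark that ``(b) only requires $g\in\Q[x]$''. ``Such a $g$'' refers back to (a), including the condition $g(\N)\subseteq\Z\setminus\{0,-1\}$. That condition is what makes (a) and (b) together a description of all $P$ with $P(\cT)\subseteq\cT$, and the paper's proof verifies it. Symmetry alone does not suffice: $g=T_x/2$ satisfies $g\circ\iota=g$, but $P_g(t)=t(t+2)/8$ sends $1$ to $3/8$.

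The missing step is short. For $n\in\N$, $G(n)^2=F(2n+1)=8P(T_n)+1$ is the square of an odd integer. Hence $G(n)$ is itself an odd integer, and $g(n)=(G(n)-1)/2\in\Z$. Finally, $T_{g(n)}=P(T_n)\ge1$ rules out $g(n)\in\{0,-1\}$.
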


\begin{proof}
(a) In both cases $T_{g(x)}$ is invariant under $\iota$. In the variable $y=x+\frac12$, $\iota$ is $y\mapsto-y$ and $y^2=2T_x+\frac14$, so the $\iota$-invariant polynomials are exactly the elements of $\Q[T_x]$. This gives $P_g$, and uniqueness holds because $T_x$ is not constant. If $g(n)\in\Z\setminus\{0,-1\}$, then $P_g(T_n)=T_{g(n)}\in\cT$.

(b) Let $F(x)=8P(T_x)+1$. By \eqref{eq:plutarch}, $F(n)$ is the square of an odd integer for every $n\in\N$, so Lemma~\ref{lem:squares} gives $F=G^2$ with $G\in\Q[x]$, and $G(n)$ is an odd integer for every $n\in\N$. Put $g=(G-1)/2$; then $T_{g(x)}=\frac{G(x)^2-1}{8}=P(T_x)$ and $g(\N)\subseteq\Z$. Since $F\circ\iota=F$, we get $(G\circ\iota-G)(G\circ\iota+G)=0$ in $\Q[x]$, so $G\circ\iota=\pm G$; this translates into $g\circ\iota=g$ or $g\circ\iota=-1-g=\iota\circ g$. Moreover $g(n)\notin\{0,-1\}$ for $n\in\N$, because $T_{g(n)}=P(T_n)\ge1$. If also $T_{g}=T_{g'}$, then $(g-g')(g+g'+1)=0$, so $g'\in\{g,\iota\circ g\}$.

(c) Substitute $x\mapsto g'(x)$ in $P_g(T_x)=T_{g(x)}$, and note that $g\circ g'$ again satisfies the symmetry condition.
\end{proof}

\begin{example}\label{ex:polymaps}
The polynomials $g(x)=(2c+1)x+c$ give the Euler maps. The polynomial $g(x)=T_x$ gives $P(t)=\frac{t(t+1)}{2}$, so $T_{T_n}$ is triangular; $g(x)=2x(x+1)$ gives $P(t)=T_{4t}=8t^2+2t$; and $g(x)=\frac{(2x+1)^3-1}{2}$ gives $P(t)=\frac{(8t+1)^3-1}{8}=64t^3+24t^2+3t$. In terms of $G=2g+1$ and $u=2x+1$, Theorem~\ref{thm:polymaps} says that $G$ is an even or an odd polynomial in $u$, i.e.\ $8P(t)+1=H(8t+1)^2$ or $8P(t)+1=(8t+1)H(8t+1)^2$ with $H\in\Q[z]$. In the even case $P(t)=T_{h(t)}$ with $h(t)=\frac{H(8t+1)-1}{2}$, so $P$ factors through $y\mapsto T_y$; the odd case is the instance $n=2$, $r=1$ (with the same $H$) of the family $x^rH(x)^n\circ x^n=x^n\circ x^rH(x^n)$ in Ritt's second theorem \cite{Ritt}.
\end{example}

\subsection{Higher degree}\label{sec:higher}

\begin{theorem}\label{thm:higher}
Let $f,g\in\R[x]$ have the same degree $d\ge2$ and positive leading coefficients $\lambda_f,\lambda_g$, and write $f(x)=\lambda_f(x+\theta_f)^d+O(x^{d-2})$ and $g(x)=\lambda_g(x+\theta_g)^d+O(x^{d-2})$. Let $a>0$ and $b\in\R$, and suppose that $af(n)+b\in g(\N)$ for all sufficiently large $n\in\N$. Then $\rho:=(a\lambda_f/\lambda_g)^{1/d}\in\N$, $\beta:=\rho\theta_f-\theta_g\in\Z$, and $af(x)+b=g(\rho x+\beta)$ identically. If moreover $f=g$ and $a\ne1$, then $a=\rho^d$ with $\rho\ge2$, and
\[
f(x)=\lambda(x-\nu)^d+\mu\quad\text{for some }\lambda,\nu,\mu\in\R,\qquad\text{with }(\rho-1)\nu\in\Z\ \text{ and }\ b=(1-a)\mu.
\]
\end{theorem}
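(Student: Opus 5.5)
I would imitate the proof of Proposition~\ref{prop:rigidity}, replacing square roots by $d$-th roots. For large $n$ choose $m_n\in\N$ with $af(n)+b=g(m_n)$; then $m_n\to\infty$. Since $g$ is eventually increasing, write $m_n=g^{-1}(af(n)+b)$ for the inverse branch $g^{-1}$ near $+\infty$. The key expansion is that for large $y$
\[
g^{-1}(y)=(y/\lambda_g)^{1/d}-\theta_g+O\bigl(y^{-1/d}\bigr),
\]
which holds because $g(x)=\lambda_g(x+\theta_g)^d\bigl(1+O(x^{-2})\bigr)$. Similarly,
\[
\bigl((af(n)+b)/\lambda_g\bigr)^{1/d}=\rho(n+\theta_f)+O(1/n),
\]
because the $x^{d-1}$ term has been absorbed into $(x+\theta_f)^d$. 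The remaining error terms are relatively $O(n^{-2})$, which after the $d$-th root gives $O(n^{-1})$. Combining these,
\[
m_n=\rho n+\rho\theta_f-\theta_g+O(1/n).
\]
Hence $m_{n+1}-m_n=\rho+O(1/n)$, an eventually constant integer, so $\rho\in\N$. Then $m_n-\rho n$ is an integer sequence converging to $\beta$, so $\beta\in\Z$ and $m_n=\rho n+\beta$ for all large $n$. The polynomials $af(x)+b$ and $g(\rho x+\beta)$ then agree at infinitely many integers, so they are identical. This step is routine; the main care is keeping the error exponent right, i.e.\ using that the second-order terms are $O(x^{d-2})$.

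For the case $f=g$ and $a\ne1$, put $\phi(x)=\rho x+\beta$, so that $f\circ\phi=af+b$ with $a=\rho^d$. If $\rho=1$ then $a=1$, so $\rho\ge2$. Since $\rho\ne1$, $\phi$ has a unique fixed point $\nu=\beta/(1-\rho)$, and $\beta\in\Z$ gives $(\rho-1)\nu=-\beta\in\Z$. Expand $f(x)=\sum_k c_k(x-\nu)^k$ in powers of $x-\nu$. Since $\phi(x)-\nu=\rho(x-\nu)$, the identity becomes
\[
\sum_k c_k\rho^k(x-\nu)^k=\rho^d\sum_k c_k(x-\nu)^k+b.
\]
For $1\le k<d$, comparing coefficients gives $c_k(\rho^k-\rho^d)=0$, so $c_k=0$ because $\rho\ge2$. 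Thus $f=\lambda(x-\nu)^d+\mu$ with $\lambda=c_d$ and $\mu=c_0$. The constant term gives $\mu=\rho^d\mu+b$, that is, $b=(1-a)\mu$.

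The main obstacle is the asymptotic inversion in the first step, namely justifying the expansion of $g^{-1}$ and of the $d$-th root with an $O(1/n)$ error. I would handle it with the binomial series $(1+\epsilon)^{1/d}=1+O(\epsilon)$, applied to
\[
af(n)+b=a\lambda_f(n+\theta_f)^d\bigl(1+O(n^{-2})\bigr),
\]
and to $g$ evaluated at $m_n$, where $m_n\asymp n$.
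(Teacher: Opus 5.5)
Your proposal is correct and follows essentially the same route as the paper's proof. You derive the asymptotic $m_n=\rho n+\rho\theta_f-\theta_g+O(1/n)$ by taking $d$-th roots, which forces $\rho\in\N$ and $\beta\in\Z$. You then expand $f$ about the fixed point $\nu$ of $x\mapsto\rho x+\beta$ to kill the coefficients in degrees $1,\dots,d-1$ and to obtain $b=(1-a)\mu$.
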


\begin{proof}
Write $af(n)+b=g(m_n)$ with $m_n\in\N$; then $m_n\to\infty$. Inverting $g$ near $+\infty$ gives $m=(Y/\lambda_g)^{1/d}-\theta_g+O(Y^{-1/d})$ whenever $g(m)=Y$, while $\bigl((af(n)+b)/\lambda_g\bigr)^{1/d}=\rho(n+\theta_f)+O(1/n)$. Hence $m_n=\rho n+\rho\theta_f-\theta_g+O(1/n)$, and, exactly as in the proof of Proposition~\ref{prop:rigidity}, $m_{n+1}-m_n$ is eventually equal to $\rho\in\N$ and $m_n-\rho n$ is eventually equal to $\beta=\rho\theta_f-\theta_g\in\Z$. Thus $af(n)+b=g(\rho n+\beta)$ for all large $n$, and hence identically.

If $f=g$ and $a\ne1$, then $\rho\ne1$, so $\rho\ge2$, and $L(x)=\rho x+\beta$ has the fixed point $\nu=-\beta/(\rho-1)$. Put $F(y)=f(\nu+y)=\sum_{j=0}^dF_jy^j$. From $f\circ L=af+b$ we get $F(\rho y)=\rho^dF(y)+b$, that is, $\rho^jF_j=\rho^dF_j$ for $1\le j\le d$ and $F_0=\rho^dF_0+b$. Hence $F_j=0$ for $1\le j<d$, so $f(x)=F_d(x-\nu)^d+F_0$ and $b=(1-\rho^d)F_0$; also $(\rho-1)\nu=-\beta\in\Z$.
\end{proof}

Thus, in degree $d\ge3$, a set $f(\N)$ admits an affine self-map other than the identity only if $f$ is a translated monomial $\lambda(x-\nu)^d+\mu$. For $a=1$ the theorem gives $\rho=1$, $\beta=0$ and $b=0$, so there are no nontrivial translations in any degree $d\ge2$.

\begin{example}\label{ex:cubic}
The tetrahedral numbers $\mathrm{Te}_n=\binom{n+2}{3}=\frac{y^3-y}{6}$ ($y=n+1$) and the square pyramidal numbers $\mathrm{Py}_n=\frac{n(n+1)(2n+1)}{6}=\frac{w^3-w}{24}$ ($w=2n+1$) have a nonzero linear term in the centred variable, so neither set has an affine self-map other than the identity. The same holds for the octahedral numbers $\frac{2n^3+n}{3}$, the centred cube numbers $n^3+(n-1)^3=2y^3+\frac32y$ ($y=n-\frac12$) and the pentatope numbers $\binom{n+3}{4}$, while the cubes admit only the maps $N\mapsto\rho^3N$. Between different families, however, affine maps can exist: Theorem~\ref{thm:higher} shows that $4\,\mathrm{Py}_n=\mathrm{Te}_{2n}$ is the only non-constant affine map from the square pyramidal numbers into the tetrahedral numbers, and that there is none in the opposite direction.
\end{example}

\subsection{Sums of two triangular numbers}\label{sec:two}
Let $\cS=\{T_i+T_j:\ i,j\ge0\}$, where $T_0=0$ is allowed. Writing $8N+2=(2i+1)^2+(2j+1)^2$ and using $2(x^2+y^2)=(x+y)^2+(x-y)^2$, one sees that $N\in\cS$ if and only if $N\ge0$ and $4N+1$ is a sum of two squares.

\begin{theorem}\label{thm:two}
Let $a\in\N$ and $b\in\Z$. Then $a\cS+b\subseteq\cS$ if and only if $a\equiv1\pmod4$, $a$ is a sum of two squares, and $b=\frac{a-1}{4}$.
\end{theorem}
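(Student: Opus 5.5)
The plan is to transport everything through $\kappa(N)=4N+1$, which maps $\cS$ bijectively onto the set $Q$ of positive integers $\equiv1\pmod4$ that are sums of two squares. This follows from the criterion stated just before the theorem. For $N\in\cS$ put $h=4b+1-a$; then
\[
4(aN+b)+1=a(4N+1)+h .
\]
So $a\cS+b\subseteq\cS$ is equivalent to requiring $a q+h\in Q$ for every $q\in Q$, where the condition $aN+b\ge0$ is automatic once $aq+h$ is a positive element of $Q$.

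\emph{Sufficiency.} If $b=\frac{a-1}{4}$ then $h=0$. For $q\in Q$ the number $aq$ is a product of two sums of two squares, hence a sum of two squares by the Brahmagupta--Fibonacci identity. It is positive and $\equiv1\cdot1\equiv1\pmod4$, so $aq\in Q$. Explicitly, $aN+\frac{a-1}{4}=\kappa^{-1}\bigl(a\kappa(N)\bigr)\in\cS$.

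\emph{Necessity.} The key step is to prove $h=0$, and I expect it to be the main obstacle. The earlier rigidity tools (Lemma~\ref{lem:squares}, Proposition~\ref{prop:rigidity}) do not apply, because $\cS$ is not the value set of a single quadratic. Instead I would argue by a local obstruction at a prime $p\equiv3\pmod4$. Suppose $h\ne0$ and choose such a prime with $p\nmid ah$; there are infinitely many, so this is possible. I want $q=k^2+l^2$ with $k$ odd and $l$ even, so that $q\equiv1\pmod4$ and $q\in Q$, satisfying
\[
q\equiv a^{-1}(p-h)\pmod{p^2}.
\]
Then $aq+h\equiv p\pmod{p^2}$, so $p$ divides $aq+h$ to exactly the first power, and therefore $aq+h$ is not a sum of two squares. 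This contradicts $aq+h\in Q$.

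To construct such a $q$, note that the target residue $r=a^{-1}(p-h)$ is a unit mod $p$, since $p\nmid h$. Every residue mod $p$ is a sum of two squares, so there are $k_0,l_0$ with $k_0^2+l_0^2\equiv r\pmod p$, and at least one of them is nonzero mod $p$. Because $p$ is odd and $2k_0$ (or $2l_0$) is invertible mod $p$, Hensel's lemma lifts this to a solution mod $p^2$. Finally the Chinese remainder theorem adjusts $k$ and $l$ modulo $4$ to be odd and even respectively, without changing them mod $p^2$. This gives the required $q$, hence the contradiction, so $h=0$.

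With $h=0$ we get $a=4b+1\equiv1\pmod4$ and $b=\frac{a-1}{4}$. Taking $N=0\in\cS$ shows $b\in\cS$, so $4b+1=a\in Q$, i.e.\ $a$ is a sum of two squares. This completes the necessity direction.
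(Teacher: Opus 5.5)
Your proof is correct and follows the paper's argument essentially step for step. Both use the shift $h=4b+1-a$, the local obstruction at a prime $\equiv3\pmod4$ not dividing $ah$ (Hensel lift of a mod-$p$ representation of the target residue, CRT to make $k$ odd and $l$ even, then exact divisibility by $p$), and finally $N=0$ to get $a=4b+1$ as a sum of two squares.
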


\begin{proof}
Let $\Sigma$ denote the set of sums of two squares. If $a\in\Sigma$, $a\equiv1\pmod4$ and $b=\frac{a-1}4$, then $4(aN+b)+1=a(4N+1)$ is a product of two elements of $\Sigma$, hence lies in $\Sigma$ by the Brahmagupta--Fibonacci identity; so $aN+b\in\cS$ for every $N\in\cS$.

Conversely, suppose that $a\cS+b\subseteq\cS$, and put $h=4b+1-a$, so that $4(aN+b)+1=a(4N+1)+h$. Assume that $h\ne0$, and choose a prime $\ell\equiv3\pmod4$ with $\ell\nmid ah$. Let $r$ be an integer with $ar+h\equiv \ell\pmod{\ell^2}$; then $\ell\nmid r$. Every residue modulo $\ell$ is a sum of two squares; since $r\not\equiv0\pmod \ell$, a representation $r\equiv x_0^2+y_0^2\pmod \ell$ has, say, $x_0\not\equiv0$, so by Hensel's lemma it lifts to $r\equiv x_1^2+y_0^2\pmod{\ell^2}$. By the Chinese remainder theorem we may pick $x\equiv x_1$ and $y\equiv y_0\pmod{\ell^2}$ with $x$ odd and $y$ even. Then $m=x^2+y^2\equiv1\pmod4$, so $N=\frac{m-1}{4}\in\cS$, while $4(aN+b)+1=am+h\equiv \ell\pmod{\ell^2}$ is divisible by $\ell$ exactly once and therefore is not a sum of two squares. Thus $aN+b\notin\cS$, a contradiction. Hence $h=0$, i.e.\ $a=4b+1$, so $a\equiv1\pmod4$; and $b=a\cdot0+b\in\cS$ gives $a=4b+1\in\Sigma$.
\end{proof}

\begin{remark}\label{rem:two}
For one triangular number the admissible multipliers are the odd squares, that is, the values of $x^2$ that are $\equiv1\pmod8$; for two triangular numbers they are the values of $x^2+y^2$ (the norms from $\Z[i]$) that are $\equiv1\pmod4$, so that, for example, $5N+1$, $13N+3$ and $17N+4$ preserve $\cS$. For an odd square $a=(2c+1)^2$ the map is $N\mapsto aN+2T_c$, and $a(T_i+T_j)+2T_c=E_{2c+1}(T_i)+E_{2c+1}(T_j)$; Euler noted the cases $9N+2$, $25N+6$ and $49N+12$ as corollaries of his observations on $9T+1$, $25T+3$ and $49T+6$ \cite[\S87]{Euler}. For three or more triangular numbers there is no restriction, since by Gauss's theorem every nonnegative integer is a sum of three triangular numbers; for weighted sums of triangular numbers representing every positive integer, see the ``triangular theorem of eight'' of Bosma and Kane \cite{BosmaKane}. In both nontrivial cases the maps are homotheties centred at the minimum of the corresponding quadratic polynomial on $\R^k$, namely $-\frac18$ and $-\frac14$. Note that $\cS$ contains the consecutive integers $T_k$ and $T_k+1$ for every $k$, so its gaps do not tend to infinity and Theorem~\ref{thmC} does not apply; the proof of Theorem~\ref{thm:two} uses a local obstruction instead of growth.
\end{remark}

\section{The monoid of self-maps of a set of integers}\label{sec:general}

For $S\subseteq\R$ let $\End(S)$ be the set of affine maps $\phi(x)=ax+b$ with $\phi(S)\subseteq S$. It contains the identity and the constant maps with values in $S$, and it is closed under composition, so it is a monoid; its non-constant elements form a submonoid $\End^*(S)$, whose elements we call the \emph{self-maps} of $S$. Let $\Aut(S)$ be the set of self-maps $\phi$ with $\phi(S)=S$.

\subsection{Basic structure}

\begin{proposition}\label{prop:basic}
Let $S\subseteq\Z$ with $\lvert S\rvert\ge2$, and let $\phi(x)=ax+b$ be a self-map of $S$.
\begin{enumerate}[label=(\alph*),leftmargin=*]
\item $a$ and $b$ are integers; in particular $\lvert a\rvert\ge1$, and $\phi$ is injective.
\item $\Aut(S)$ is the group of units of $\End(S)$, and it is a subgroup of the infinite dihedral group $D_\infty=\{x\mapsto\pm x+k:\ k\in\Z\}$. Hence $\Aut(S)$ is trivial, or generated by one reflection (and $S$ is symmetric about a point), or generated by one translation $x\mapsto x+d$ (and $S$ is periodic), or generated by one reflection and one translation.
\item If $S$ is finite, then $\End^*(S)=\Aut(S)$, and this group is either $\{\id\}$ or $\{\id,\sigma\}$ with $\sigma(x)=\min S+\max S-x$; the latter happens exactly when $S$ is symmetric about its midpoint.
\item If $S$ is infinite and bounded below, then $a\ge1$ and $\Aut(S)=\{\id\}$; that is, every self-map other than the identity is injective but not surjective.
\end{enumerate}
\end{proposition}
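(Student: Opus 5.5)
The plan is to prove the four parts in order, using only elementary facts about integers.

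For (a), I pick two distinct elements $s<s'$ of $S$. Then $\phi(s)$ and $\phi(s')$ are integers, and their difference $a(s'-s)$ is an integer, which alone does not force $a\in\Z$. The fix uses the fact that $\phi$ is non-constant, so $a\ne0$. I first note that $a$ is rational, and then try to get integrality from iteration. For every $k\ge1$ the iterate $\phi^k(x)=a^kx+b(1+a+\dots+a^{k-1})$ also lies in $\End^*(S)$, so $a^k(s'-s)\in\Z$ for all $k$. Write $a=p/q$ in lowest terms. Then $q^k\mid s'-s$ for every $k$, which forces $q=1$, so $a\in\Z$. Hence $\lvert a\rvert\ge1$, and $b=\phi(s)-as\in\Z$. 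Injectivity is immediate from $a\ne0$.

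For (b), let $\phi\in\End(S)$ have a two-sided inverse $\psi\in\End(S)$. Then $\phi$ is a bijection of $S$, so $\phi\in\Aut(S)$. Conversely, let $\phi\in\Aut(S)$. Then $\phi^{-1}(x)=(x-b)/a$ maps $S$ onto $S$, so it is again a self-map. By (a), both $a$ and $1/a$ are integers, so $a=\pm1$ and $\phi\in D_\infty$. This shows $\Aut(S)$ is the group of units of $\End(S)$. The stated list of cases then comes from the classification of subgroups of $D_\infty$: the translation part is $d\Z$, and there are at most two reflection cosets, which can be merged.

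For (c), assume $S$ is finite. A self-map is injective by (a), so it is a bijection of $S$, and hence $\End^*(S)=\Aut(S)$. By (b), $a=\pm1$. A nontrivial translation cannot preserve a finite set, since it would move $\max S$ outside $S$. If $a=-1$, the map reverses order, so it sends $\min S$ to $\max S$; this fixes $b=\min S+\max S$, and such a map exists exactly when $S$ is symmetric about its midpoint. For (d), assume $S$ is infinite and bounded below. If $a\le-1$, then $\phi(S)$ is unbounded below, contradicting $\phi(S)\subseteq S$, so $a\ge1$. An automorphism has $a=1$, so it is a translation $x\mapsto x+b$ that maps $S$ onto itself. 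It therefore fixes $\min S$, which gives $b=0$.

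I expect the only delicate step to be the integrality of $a$ in (a). The iteration argument above should settle it.
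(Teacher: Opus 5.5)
Your proposal is correct and follows the paper's proof essentially step by step: the same $q^k\mid s'-s$ iteration in (a), integrality of both $a$ and $a^{-1}$ in (b), and the $\min S$/$\max S$ exchange in (c). In (c), a negative translation sends $\min S$ (not $\max S$) outside $S$, which is a trivial fix. The only real variation is in (d): you use (b) to reduce an automorphism to a translation that must fix $\min S$, whereas the paper argues that any increasing bijection of a set of order type $\omega$ is the identity; both arguments work.
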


\begin{proof}
(a) Pick $s\ne s'$ in $S$. Then $a=\frac{\phi(s)-\phi(s')}{s-s'}\in\Q$; write $a=p/q$ in lowest terms with $q\ge1$. Since $\phi^k(S)\subseteq S\subseteq\Z$, the number $\phi^k(s)-\phi^k(s')=\frac{p^k}{q^k}(s-s')$ is an integer for every $k$, so $q^k\mid s-s'$ for all $k$, whence $q=1$. Then $b=\phi(s)-as\in\Z$, and $\phi$ is injective because $a\ne0$.

(b) If $\phi(S)=S$, then the affine map $\phi^{-1}$ also satisfies $\phi^{-1}(S)=S$, so $\phi$ is a unit; conversely, if $\phi\circ\psi=\id$ with $\psi\in\End(S)$, then $S=\phi(\psi(S))\subseteq\phi(S)$. By (a) both $a$ and $a^{-1}$ are integers, so $a=\pm1$. The list of cases is the list of subgroups of $D_\infty$.

(c) An injective self-map of a finite set is bijective, so $\End^*(S)=\Aut(S)$. A translation $x\mapsto x+k$ preserving a finite nonempty set has $k=0$, and a reflection $x\mapsto k-x$ preserving $S$ must exchange $\min S$ and $\max S$.

(d) An infinite set of integers that is bounded below is unbounded above, so $a<0$ would give $\phi(s)\to-\infty$; hence $a\ge1$. An increasing bijection of a set of order type $\omega$ onto itself is the identity, since by induction it fixes the $k$-th smallest element for every $k$.
\end{proof}

\begin{proposition}\label{prop:additive}
Let $S\subseteq\R$. Then $\End(S)$ is closed under pointwise addition if and only if $S+S\subseteq S$. In that case $(\End(S),+,\circ)$ is a right near-semiring, that is, $(\phi+\psi)\circ\chi=\phi\circ\chi+\psi\circ\chi$; the left distributive law $\chi\circ(\phi+\psi)=\chi\circ\phi+\chi\circ\psi$ holds exactly when $\chi(0)=0$, so the maps with $\phi(0)=0$ form a semiring (without zero if $0\notin S$).
\end{proposition}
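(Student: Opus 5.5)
The plan is to handle the three assertions of Proposition~\ref{prop:additive} one at a time, working directly with the pointwise sum $(\phi+\psi)(x)=\phi(x)+\psi(x)$. This sum is again an affine map: $(a x+b)+(a'x+b')=(a+a')x+(b+b')$.

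\emph{The equivalence.} Suppose first that $S+S\subseteq S$. If $\phi,\psi\in\End(S)$ and $s\in S$, then $(\phi+\psi)(s)=\phi(s)+\psi(s)$ lies in $S+S\subseteq S$, so $\phi+\psi\in\End(S)$. Conversely, suppose $\End(S)$ is closed under addition, and take $s,s'\in S$. The constant maps $c_s$ and $c_{s'}$ belong to $\End(S)$, so their sum, the constant map with value $s+s'$, also belongs to $\End(S)$. Applying it to any element of $S$ shows $s+s'\in S$. The case $S=\emptyset$ needs a remark: there every affine map lies in $\End(S)$, so both conditions hold trivially. This constant-map trick is the only point needing a small idea.

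\emph{Right distributivity.} This holds for arbitrary functions, since
\[
\bigl((\phi+\psi)\circ\chi\bigr)(x)=\phi(\chi(x))+\psi(\chi(x)).
\]
Associativity of $+$ and of $\circ$ is clear, so $(\End(S),+,\circ)$ is a right near-semiring.

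\emph{Left distributivity.} Write $\chi(x)=cx+d$. Then
\[
\chi\circ(\phi+\psi)=c(\phi+\psi)+d,\qquad \chi\circ\phi+\chi\circ\psi=c(\phi+\psi)+2d,
\]
so the two sides agree if and only if $d=0$, that is, $\chi(0)=0$.

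\emph{The semiring.} The maps with $\phi(0)=0$ are closed under both $+$ and $\circ$. By the previous step, left distributivity holds among them, so they form a semiring. The zero map $x\mapsto0$ lies in $\End(S)$ exactly when $0\in S$; otherwise the semiring has no additive zero, which is the parenthetical caveat in the statement.
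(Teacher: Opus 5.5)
Your proof is correct and follows essentially the same route as the paper. You use the same constant-map identity $c_s+c_{s'}=c_{s+s'}$ for the converse, and the same observation that $\chi\circ(\phi+\psi)$ and $\chi\circ\phi+\chi\circ\psi$ differ by the constant term of $\chi$. One small caveat: your claim that the zero map lies in $\End(S)$ exactly when $0\in S$ needs $S\ne\emptyset$, and the statement's parenthetical glosses over the same edge case.
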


\begin{proof}
If $S+S\subseteq S$, then $(\phi+\psi)(s)=\phi(s)+\psi(s)\in S$. Conversely, for $t,t'\in S$ the constant maps $c_t$ and $c_{t'}$ lie in $\End(S)$, and $c_t+c_{t'}=c_{t+t'}$. The distributive laws are a direct computation: if $\chi(x)=cx+e$, then $\chi\circ(\phi+\psi)$ and $\chi\circ\phi+\chi\circ\psi$ differ by the constant $e$.
\end{proof}

\begin{example}\label{ex:monoids}
If $S$ is a nonzero submonoid of $(\Z_{\ge0},+)$, for instance a numerical semigroup such as $\langle3,5\rangle=\{0,3,5,6,8,9,10,\dots\}$, then
\[
\End^*(S)=\{x\mapsto ax+b:\ a\in\N,\ b\in S\},
\]
a semidirect product $S\rtimes(\N,\times)$ with $(a,b)\circ(a',b')=(aa',ab'+b)$. Indeed $a\in\N$ by Proposition~\ref{prop:basic}, $b=\phi(0)\in S$, and conversely $as+b=s+\dots+s+b\in S$. If $S=d\Z$ with $d\ge1$, then $\End^*(S)=\{x\mapsto ax+b:\ a\in\Z\setminus\{0\},\ d\mid b\}$ and $\Aut(S)\cong D_\infty$.
\end{example}

\subsection{Sparse sets}\label{sec:sparse-sets}

\begin{theorem}\label{thm:sparse}
Let $S\subseteq\Z$ be infinite and bounded below, with elements $s_1<s_2<\cdots$, and assume that $s_{k+1}-s_k\to\infty$. Then:
\begin{enumerate}[label=(\alph*),leftmargin=*]
\item every self-map of $S$ is determined by its slope, and the only self-map of slope $1$ is the identity;
\item $\End^*(S)$ is commutative, and if $\End^*(S)\ne\{\id\}$, there is $\omega\in\Q$ with $\omega\le\min S$ such that every self-map is the homothety $\phi(x)=\omega+a(x-\omega)$;
\item consequently $\phi\mapsto a$ is an isomorphism of $\End^*(S)$ onto a submonoid $A(S)$ of $(\N,\times)$;
\item every submonoid $A\ne\{1\}$ of $(\N,\times)$ occurs up to isomorphism: for $S_A=\{d^2:\ d\in A\}$ one has $\omega=0$ and $A(S_A)=\{d^2:\ d\in A\}\cong A$.
\end{enumerate}
\end{theorem}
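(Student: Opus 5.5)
The plan is to exploit the growing gaps to show that a self-map is pinned down by its behaviour on large elements. By Proposition~\ref{prop:basic}(a),(d), every self-map has the form $\phi(x)=ax+b$ with $a\in\N$ and $b\in\Z$, and $\phi$ is injective and increasing.

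\textbf{Step 1 (index shift).} Let $\phi,\psi$ be self-maps with the same slope $a$, say $\psi=\phi+c$ with $c\in\Z$. Then $\phi(s_k)$ and $\phi(s_k)+c$ both lie in $S$ for every $k$. Since gaps tend to infinity, for large $k$ any two elements of $S$ differing by $\lvert c\rvert$ must coincide (as $\phi(s_k)\to\infty$), so $c=0$. This gives the first half of (a). For slope $1$, a self-map $x\mapsto x+b$ with $b\ge0$ would have to be the identity by the same argument, since $s_k$ and $s_k+b$ both lie in $S$. Alternatively one can compare it directly with the identity, which has slope $1$.

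\textbf{Step 2 (commutativity and homothety).} For self-maps $\phi(x)=ax+b$ and $\psi(x)=a'x+b'$, the compositions $\phi\circ\psi$ and $\psi\circ\phi$ are self-maps with the same slope $aa'$. By Step 1 they are equal, so $ab'+b=a'b+b'$, that is, $b(a'-1)=b'(a-1)$. If some $\phi_0\ne\id$ exists, its slope satisfies $a_0\ge2$ by (a). Put $\omega=b_0/(1-a_0)\in\Q$, the fixed point of $\phi_0$. The relation then gives $b=\omega(1-a)$ for every self-map, which is exactly $\phi(x)=\omega+a(x-\omega)$; the identity trivially has this form. To see $\omega\le\min S$, apply $\phi_0$ to $s_1$: we have $\phi_0(s_1)\ge s_1$, so $\omega+a_0(s_1-\omega)\ge s_1$, which gives $(a_0-1)(s_1-\omega)\ge0$, hence $s_1\ge\omega$. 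This proves (b).

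\textbf{Step 3 (the slope monoid).} Part (c) follows immediately: $\phi\mapsto a$ is multiplicative under composition and injective by (a), and its image $A(S)$ lies in $(\N,\times)$.

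\textbf{Step 4 (realization).} For (d), take $S_A=\{d^2:\ d\in A\}$. Since $A\ne\{1\}$ contains some $d\ge2$ and is closed under powers, $S_A$ is infinite, and its gaps tend to infinity because consecutive squares already differ by $2d+1$. For $e\in A$ the map $x\mapsto e^2x$ sends $d^2$ to $(ed)^2\in S_A$, so $\omega=0$ and $A(S_A)\supseteq\{e^2:\ e\in A\}$. Conversely, a self-map has the form $x\mapsto a x$ with $\omega=0$, since $\omega$ is determined by any non-identity map. Applying it to $1=1^2\in S_A$, which holds because $1\in A$, gives $a=a\cdot1\in S_A$, so $a=e^2$ with $e\in A$. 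Then $e\mapsto e^2$ is an isomorphism from $A$ onto $A(S_A)$.

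The main obstacle is Step 1, specifically making the argument ``two elements of $S$ differing by $c$'' rigorous. One must check that the images $\phi(s_k)$ go to infinity, so that they eventually lie where all gaps of $S$ exceed $\lvert c\rvert$. This holds because $a\ge1$.
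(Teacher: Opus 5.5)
Your proposal is correct and follows the paper's proof essentially step for step: you use the same gap argument for (a), derive commutativity from the equal slopes of $\phi\circ\psi$ and $\psi\circ\phi$, and apply the same test on $1\in S_A$ for (d). The differences are cosmetic: you read off the common centre from the coefficient identity $b(a'-1)=b'(a-1)$ where the paper argues that commuting maps share their fixed point, and you get $\omega\le\min S$ from the single inequality $\phi_0(s_1)\ge s_1$ where the paper iterates $\phi^k(s)\to-\infty$.
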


\begin{proof}
(a) Suppose that $\phi(x)=ax+b$ and $\psi(x)=ax+b'$ are self-maps with $b<b'$; here $a\ge1$ by Proposition~\ref{prop:basic}. For every $s\in S$ the numbers $as+b<as+b'$ belong to $S$ and differ by $b'-b$. Letting $s\to\infty$ produces arbitrarily large elements of $S$ at distance $b'-b$, contradicting $s_{k+1}-s_k\to\infty$. Comparing a self-map of slope $1$ with the identity gives the second statement.

(b) $\phi\circ\psi$ and $\psi\circ\phi$ have the same slope, so they coincide by (a). If $\phi$ and $\psi$ commute and have slopes different from $1$, with fixed points $\omega_\phi$ and $\omega_\psi$, then $\phi(\omega_\psi)=\phi(\psi(\omega_\psi))=\psi(\phi(\omega_\psi))$, so $\phi(\omega_\psi)$ is fixed by $\psi$; hence $\phi(\omega_\psi)=\omega_\psi$, and $\omega_\psi=\omega_\phi$. By (a) and Proposition~\ref{prop:basic}, all self-maps other than the identity have slope at least $2$, so they share a fixed point $\omega=b/(1-a)$, which is rational. If some $s\in S$ satisfied $s<\omega$, then $\phi^k(s)=\omega+a^k(s-\omega)\to-\infty$, which is impossible.

(c) follows from (a) and (b).

(d) The set $S_A$ is infinite and bounded below, and its gaps tend to infinity. For $c\in A$ the map $x\mapsto c^2x$ preserves $S_A$; taking $c\ge2$ shows that the common centre is $\omega=0$. Hence the self-maps of $S_A$ are the maps $x\mapsto\alpha x$ with $\alpha S_A\subseteq S_A$. As $1\in S_A$, this forces $\alpha=d^2$ with $d\in A$, and conversely every such $\alpha$ works.
\end{proof}

Theorem~\ref{thmC} consists of parts (a) and (b). For the polygonal numbers the centre is $\omega=\mu_s$, the minimum of $P_s$ on $\R$, and $A(\cP_s)=\{x^2:\ x\equiv1\pmod{M_s}\}$ by Theorem~\ref{thm:polygonal}. For $\{n^2+1:\ n\in\N\}$ one finds $\omega=1$ and the maps $x\mapsto\rho^2x+1-\rho^2$, and for $\{2^k:\ k\ge0\}$ one finds $\omega=0$ and $A=\{2^j:\ j\ge0\}$. The set $\{F_n:\ n\ge1\}$ of Fibonacci numbers, the tetrahedral numbers and the square pyramidal numbers have no self-map other than the identity; for the Fibonacci numbers, a homothety of integral ratio $a\ge2$ would force $F_{m_n}/F_n\to a$ along the images $F_{m_n}$ of the $F_n$, whereas such ratios accumulate only at the numbers $\tau^k$, $k\in\Z$, where $\tau=\frac{1+\sqrt5}{2}$, and these are irrational for $k\ne0$.

In the centred coordinate $X=x-\omega$, every self-map becomes linear, $X\mapsto aX$. This settles the question of when the sum of two self-maps is a self-map.

\begin{proposition}\label{prop:sums}
Let $S$ be as in Theorem~\ref{thm:sparse} with $\End^*(S)\ne\{\id\}$, let $\omega$ be its centre, and let $\phi,\psi$ be self-maps of $S$ with slopes $a,a'$.
\begin{enumerate}[label=(\alph*),leftmargin=*]
\item If $\omega\ne0$, then $\phi+\psi$ is not a self-map of $S$.
\item If $\omega=0$, then $\phi+\psi$ is a self-map of $S$ if and only if $a+a'\in A(S)$.
\item For $\lambda\in\R$, the affine combination $\lambda\phi+(1-\lambda)\psi$ is a self-map of $S$ if and only if $\lambda a+(1-\lambda)a'\in A(S)$.
\end{enumerate}
\end{proposition}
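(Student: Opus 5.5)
The plan is to work throughout in the centred coordinate $X=x-\omega$. By Theorem~\ref{thm:sparse}(b) every self-map of $S$ has the form $\phi(x)=\omega+a(x-\omega)$, so $\phi(x)=ax+(1-a)\omega$ and $\psi(x)=a'x+(1-a')\omega$. Both the sum and the affine combination are again affine maps, and the whole question is whether they take the homothety form with centre $\omega$ and a slope in $A(S)$. Theorem~\ref{thm:sparse}(a)--(c) supply the key fact: a self-map is determined by its slope, and the slopes that occur are exactly the elements of $A(S)$. Hence an affine map $\chi(x)=cx+e$ is a self-map of $S$ if and only if $c\in A(S)$ and $e=(1-c)\omega$.

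For (c), compute directly:
\[
\lambda\phi+(1-\lambda)\psi=c\,x+(1-c)\omega,\qquad c=\lambda a+(1-\lambda)a'.
\]
So this map is always a homothety centred at $\omega$ with ratio $c$, and therefore has the correct intercept for slope $c$. If $c\in A(S)$, the map equals the unique self-map of slope $c$, namely $x\mapsto\omega+c(x-\omega)$. Conversely, if it is a self-map, its slope lies in $A(S)$ by part (c) of Theorem~\ref{thm:sparse}.

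For (a) and (b), the sum is
\[
(\phi+\psi)(x)=(a+a')x+(2-a-a')\omega .
\]
A self-map of slope $a+a'$ must have intercept $(1-a-a')\omega$. The two intercepts differ by exactly $\omega$, so $\phi+\psi$ can be a self-map only if $\omega=0$, which proves (a). When $\omega=0$, the intercept condition holds automatically, and $\phi+\psi$ is a self-map if and only if $a+a'\in A(S)$, which proves (b). Alternatively, (b) is the case $\lambda$-free analogue of (c) in linear coordinates.

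The only step needing care is the "determined by its slope" characterisation, and it is already provided by Theorem~\ref{thm:sparse}. The rest is bookkeeping of intercepts, so I expect no real obstacle. One small point: in (c) a non-constant map is required, and $c\in A(S)\subseteq\N$ guarantees $c\ne0$.
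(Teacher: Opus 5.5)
Your proof is correct and is essentially the paper's argument: both rest on Theorem~\ref{thm:sparse}, which says that every self-map is the homothety $x\mapsto\omega+c(x-\omega)$ with $c\in A(S)$, and both compute $\phi+\psi=(a+a')x+(2-a-a')\omega$ and $\lambda\phi+(1-\lambda)\psi=cx+(1-c)\omega$. The only difference is cosmetic. The paper compares the fixed point $\omega\frac{a+a'-2}{a+a'-1}$ of $\phi+\psi$ with $\omega$, which uses that the slope is at least $2$. You compare intercepts instead, and that comparison works for every slope.
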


\begin{proof}
We have $\phi+\psi=(a+a')x+(2-a-a')\omega$, of slope $a+a'\ge2$ and with fixed point $\omega\frac{a+a'-2}{a+a'-1}$. By Theorem~\ref{thm:sparse}, a self-map of slope at least $2$ has fixed point $\omega$, and the unique self-map of slope $c\in A(S)$ is $x\mapsto\omega+c(x-\omega)$. The fixed point of $\phi+\psi$ equals $\omega$ only if $\omega=0$, which gives (a) and (b). For (c), $\lambda\phi+(1-\lambda)\psi$ is the homothety about $\omega$ of slope $\lambda a+(1-\lambda)a'$.
\end{proof}

Thus no sum of two self-maps of $\cP_s$ is a self-map when $s\ne4$; for example, $(9T+1)+(9T+1)=18T+2$ equals $20$ for $T=1$. For the squares, $x\mapsto m^2x$ plus $x\mapsto n^2x$ is a self-map exactly when $m^2+n^2$ is a square, as in $9x+16x=25x$, and for $\{2^k\}$ the sum $\phi+\psi$ is a self-map exactly when $\phi=\psi$. Affine combinations, on the other hand, can be self-maps: $\frac12\bigl(T+(49T+6)\bigr)=25T+3$, because $\frac12(1+49)=25$.

\subsection{The size of the image}\label{sec:image}
By Proposition~\ref{prop:basic}(d), the identity is the only surjective self-map of an infinite set $S\subseteq\Z$ that is bounded below, so it is natural to measure the size of the image.

\begin{proposition}\label{prop:image}
Let $S\subseteq\N$ be infinite, let $S(N)=\lvert S\cap[1,N]\rvert$, and let $\phi(x)=ax+b$ be a self-map of $S$ with $a\ge2$.
\begin{enumerate}[label=(\alph*),leftmargin=*]
\item If $S(N)\sim cN^\delta$ for some $c,\delta>0$, then $\lvert\phi(S)\cap[1,N]\rvert/S(N)\to a^{-\delta}$.
\item If $S\setminus\phi(S)$ is finite, then $S(N)=O(\log N)$.
\end{enumerate}
\end{proposition}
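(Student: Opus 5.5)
For part (a) the natural plan is to count $\phi(S)\cap[1,N]$ directly. By Proposition~\ref{prop:basic}(a) the map $\phi$ is injective, and $b\in\Z$. We have $\phi(s)\le N$ exactly when $s\le (N-b)/a$. Since $\phi(S)\subseteq S\subseteq\N$, every image lies in $[1,N]$ as soon as it is at most $N$. Hence
\[
\lvert\phi(S)\cap[1,N]\rvert=S\bigl(\lfloor (N-b)/a\rfloor\bigr).
\]
Strictly, we should count $s\ge1$ with $as+b\le N$, and that is all of $S$ because $S\subseteq\N$. Now $(N-b)/a\sim N/a\to\infty$, so the hypothesis $S(N)\sim cN^\delta$ gives $S((N-b)/a)\sim c(N/a)^\delta$. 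Dividing by $S(N)\sim cN^\delta$ yields $a^{-\delta}$. A small point to check is that $S(\cdot)$, being monotone and regularly varying, passes through the floor and the shift by $b$; this follows because $S(x)\sim cx^\delta$ for real $x\to\infty$ once it holds for integers.

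For part (b), suppose $S\setminus\phi(S)$ has $K$ elements. Then, using the identity above, $S(N)\le K+S\bigl((N-b)/a\bigr)$ for all $N$. We iterate this recursion. Set $N_0=N$ and $N_{j+1}=(N_j-b)/a$; for large $N_j$ we have $N_{j+1}\le 2N_j/(a+1)$, say, or more simply $N_{j+1}\le (N_j+\lvert b\rvert)/a$. This reaches a bounded threshold $C$ (depending on $a,b$) after $O(\log N)$ steps, because $a\ge2$ makes $N_j\lesssim N/a^j+\lvert b\rvert/(a-1)$. Then $S(N)\le jK+S(C)=O(\log N)$.

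The only mild care needed is that the iteration stays in the regime where the recursion is valid. To handle this, I would choose $C=2\lvert b\rvert+2$, so that for $N_j>C$ the sequence strictly decreases geometrically. I expect no real obstacle; the key observation is the exact counting identity for the image, and the rest is routine.
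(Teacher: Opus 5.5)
Your proposal is correct and follows essentially the same route as the paper: part (a) comes from the counting identity $\lvert\phi(S)\cap[1,N]\rvert=S\bigl((N-b)/a\bigr)$, which you make exact where the paper writes $+O(1)$, and part (b) from iterating $S(N)\le K+S\bigl((N+\lvert b\rvert)/a\bigr)$ about $\log_a N$ times. Your extra care with the threshold $C$ is harmless but not needed, since the recursive inequality holds for every real $N$ once you set $S(x)=0$ for $x<1$.
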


\begin{proof}
Since $\phi$ is injective and increasing, $\lvert\phi(S)\cap[1,N]\rvert=S\bigl((N-b)/a\bigr)+O(1)$, which gives (a). For (b), let $k=\lvert S\setminus\phi(S)\rvert$. Then $S(N)\le S\bigl((N+\lvert b\rvert)/a\bigr)+k$, and iterating this inequality $O(\log N)$ times gives $S(N)=O(\log N)$.
\end{proof}

For the polygonal numbers, $S(N)\sim\sqrt{2N/(s-2)}$, and by Theorem~\ref{thm:polygonal} the image of $E_{s,x}$ is $\{P_s(m):\ m\equiv-jp_s\pmod x,\ m>-jp_s\}$, of relative density $1/x$. Thus no self-map of slope at least $2$ of a set of polynomial growth has an image with finite complement, whereas for $\{2^k:\ k\ge0\}$ the map $x\mapsto2x$ misses only the element $1$. The condition on the slope cannot be dropped: the translation $x\mapsto x+1$ maps $\N$ onto $\N\setminus\{1\}$.

\subsection{Triangular numbers as a free orbit}\label{sec:freeorbit}
Let $\cT_0=\cT\cup\{0\}$. The following proposition identifies the triangular numbers, together with their self-maps, with the multiplicative monoid of odd positive integers.

\begin{proposition}\label{prop:freeorbit}
\begin{enumerate}[label=(\alph*),leftmargin=*]
\item The self-maps of $\cT_0$, and also those of $\cT$, are the Euler maps $E_u$ with $u$ odd, and $E_u\circ E_v=E_{uv}$. Hence both monoids are isomorphic to the multiplicative monoid of odd positive integers.
\item The map $u\mapsto E_u(0)=T_{(u-1)/2}$ is a bijection from the odd positive integers onto $\cT_0$, and $T\mapsto\sqrt{8T+1}$ carries the action of $E_u$ on $\cT_0$ to multiplication by $u$. Thus $\cT_0$ is a free orbit of its monoid of self-maps, generated by $0$.
\item $E_u(\cT_0)=\{T\in\cT_0:\ u\mid\sqrt{8T+1}\}$. Hence $E_u(\cT_0)\cap E_v(\cT_0)=E_{\mathrm{lcm}(u,v)}(\cT_0)$, and $E_u(\cT_0)\supseteq E_v(\cT_0)$ if and only if $u\mid v$; the images form a lattice isomorphic to the odd positive integers ordered by divisibility, with the order reversed.
\end{enumerate}
\end{proposition}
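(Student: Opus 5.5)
The plan is to reduce everything to the coordinate $\kappa(N)=8N+1$, which maps $\cT_0$ bijectively onto the set of odd positive squares. Indeed, by \eqref{eq:plutarch} together with $8\cdot0+1=1$, an integer $N$ lies in $\cT_0$ exactly when $N\ge0$ and $8N+1$ is an odd square. Under this coordinate the Euler maps become multiplications: $\kappa\circ E_u=u^2\kappa$, as recorded in Section~\ref{sec:geometry}.

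\emph{Part (a).} For $\cT$, Corollary~\ref{cor:strong} shows that every self-map $aN+b$ with $a>0$ is some $E_u$ with $u$ odd. Proposition~\ref{prop:basic}(d) then guarantees $a\ge1$, so every self-map is covered. For $\cT_0$, a self-map $\phi$ of $\cT_0$ also has positive integral slope by Proposition~\ref{prop:basic}(d). I would then show that $\phi$ maps all sufficiently large triangular numbers into $\cT$: this holds because $\phi(T)\to\infty$, and the only element of $\cT_0\setminus\cT$ is $0$. So the hypothesis of Corollary~\ref{cor:strong} holds, and $\phi=E_u$.

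For the converse direction, $E_u(\cT_0)\subseteq\cT_0$ follows from \eqref{eq:euler}. Moreover $E_u(\cT)\subseteq\cT$, because $E_u(T)\ge T\ge1$. The composition law $E_u\circ E_v=E_{uv}$ comes directly from $\kappa\circ E_u=u^2\kappa$, or from Remark~\ref{rem:poly} with $s=3$. Finally, $u\mapsto E_u$ is injective, since the slope $u^2$ determines $u>0$. This gives the isomorphism with the odd positive integers under multiplication, and the identity corresponds to $u=1$.

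\emph{Part (b).} We have $E_u(0)=(u^2-1)/8=T_{(u-1)/2}$. The map $u\mapsto(u-1)/2$ is a bijection from the odd positive integers onto $\Z_{\ge0}$, so $u\mapsto E_u(0)$ is a bijection onto $\cT_0$. The map $\sqrt{\kappa}\colon T\mapsto\sqrt{8T+1}$ is a bijection from $\cT_0$ onto the odd positive integers. Taking square roots in $\kappa\circ E_u=u^2\kappa$ gives $\sqrt{\kappa(E_u(T))}=u\sqrt{\kappa(T)}$, with all square roots positive. Freeness of the orbit of $0$ is then the statement that $u\mapsto u\cdot1$ is bijective.

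\emph{Part (c).} In the $\sqrt\kappa$ coordinate, $E_u(\cT_0)$ corresponds to $u\cdot\{\text{odd positive integers}\}$, which is the set of odd multiples of $u$. This gives the description $E_u(\cT_0)=\{T:\ u\mid\sqrt{8T+1}\}$. The intersection identity then reduces to the fact that the odd common multiples of $u$ and $v$ are exactly the odd multiples of $\mathrm{lcm}(u,v)$; here $\mathrm{lcm}(u,v)$ is odd. Containment $E_u(\cT_0)\supseteq E_v(\cT_0)$ holds iff $v$ is an odd multiple of $u$, i.e.\ iff $u\mid v$. The lattice statement follows: the images are ordered by reverse divisibility, meets correspond to lcm, and by the same reasoning joins correspond to gcd.

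The only step needing care is the $\cT_0$ case of (a), namely checking that a self-map of $\cT_0$ really satisfies the "all sufficiently large $T$" hypothesis of Corollary~\ref{cor:strong}. I expect this to be immediate from $a\ge1$.
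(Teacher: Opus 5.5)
Your proposal is correct and follows essentially the same route as the paper: Proposition~\ref{prop:basic}(d) plus Corollary~\ref{cor:strong} for (a), and then the identity $8E_u(T)+1=u^2(8T+1)$ together with the bijection $T\mapsto\sqrt{8T+1}$ from $\cT_0$ onto the odd positive integers for everything else. You also make explicit a step the paper leaves implicit: a self-map of $\cT_0$ satisfies the hypothesis of Corollary~\ref{cor:strong}, because its slope is positive and $0$ is the only element of $\cT_0\setminus\cT$, so it sends all large triangular numbers into $\cT$.
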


\begin{proof}
For (a), a self-map of $\cT_0$ or of $\cT$ has slope at least $1$ by Proposition~\ref{prop:basic}(d), so it is some $E_u$ by Corollary~\ref{cor:strong}; conversely each $E_u$ preserves $\cT_0$ and $\cT$. Everything else follows from the identity $8E_u(T)+1=u^2(8T+1)$ and the fact that $T\mapsto\sqrt{8T+1}$ is a bijection from $\cT_0$ onto the odd positive integers.
\end{proof}

\subsection{Factorization of self-maps}\label{sec:factorization}
By Proposition~\ref{prop:basic}(d), Theorem~\ref{thm:polygonal} and Remark~\ref{rem:poly}, the self-maps of $\cP_s$ are the maps $E_{s,x}$ with $x\equiv1\pmod{M_s}$, and $E_{s,x}\circ E_{s,y}=E_{s,xy}$. Hence
\[
\End^*(\cP_s)\;\cong\;H_{M_s},\qquad\text{where } H_M=\{x\in\N:\ x\equiv1\pmod{M}\}\ \text{ under multiplication.}
\]
Monoids of this kind are called arithmetical congruence monoids \cite{BCCM,CLMZ}; the classical Hilbert monoid is $H_4=\{1,5,9,13,\dots\}$. More generally, for $a,M\in\N$ with $a^2\equiv a\pmod M$ the set $H_{a,M}=\{1\}\cup\{x\in\N:\ x\equiv a\pmod M\}$ is closed under multiplication. If $a\not\equiv1\pmod M$, then $H_{a,M}$ is not a Krull monoid \cite[Thm.~1]{HalterKoch}; indeed, it is not completely integrally closed: the element $u=1+M$ of its quotient group satisfies $a\,u^n\in H_{a,M}$ for every $n\ge1$, although $u\notin H_{a,M}$. Only the Krull case $a=1$ arises here. For the factorization theory of arithmetical congruence monoids in general see \cite{BaginskiChapman,BCCM,CLMZ}; congruence monoids in Dedekind domains are often, but not always, weakly Krull monoids \cite[Examples~5.7.2]{GKR}. For $s=3$ we recover Proposition~\ref{prop:freeorbit}(a), and for $s=4$ the self-maps are the maps $N\mapsto m^2N$, so that $\End^*(\cP_4)\cong(\N,\times)$. We call a self-map \emph{irreducible} if it is not the identity and is not a composition of two self-maps other than the identity.

\begin{theorem}\label{thm:factorization}
Let $s\ge3$. Every self-map of $\cP_s$ is a composition of irreducible self-maps, and:
\begin{enumerate}[label=(\alph*),leftmargin=*]
\item the factorization into irreducible self-maps is unique up to order for every self-map if and only if $s\in\{3,4\}$;
\item all factorizations of each self-map have the same number of factors if and only if $s\in\{3,4,5,6,8\}$.
\end{enumerate}
\end{theorem}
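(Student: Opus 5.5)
By Proposition~\ref{prop:basic}(d), Theorem~\ref{thm:polygonal} and Remark~\ref{rem:poly}, we have $\End^*(\cP_s)\cong H_{M_s}$ via $E_{s,x}\mapsto x$. So the plan is to translate everything into statements about the multiplicative monoid $H_M$, and at the end use the bijection $s\mapsto M_s$ of Corollary~\ref{cor:divisibility}. Since $M_s=1,2$ exactly for $s=4,3$, and $M_s\in\{1,2,3,4,6\}$ exactly for $s\in\{4,3,8,6,5\}$, it suffices to prove three things:
\begin{itemize}[leftmargin=*]
\item $H_M$ is atomic;
\item $H_M$ is factorial if and only if $M\in\{1,2\}$;
\item $H_M$ is half-factorial if and only if $M\in\{1,2,3,4,6\}$, that is, if and only if $\varphi(M)\le2$.
\end{itemize}
Atomicity follows by induction on $x$. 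If $x>1$ is not an atom, then $x=yz$ with $1<y,z<x$, and we apply the inductive hypothesis to $y$ and $z$.

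\emph{Unique factorization.} For $M=1$ the monoid is $(\N,\times)$, and for $M=2$ it is the monoid of odd numbers. Both are free on the relevant primes, so factorization is unique. For $M\ge3$, Dirichlet's theorem gives distinct primes $p,q\equiv-1\pmod M$. No prime divisor of $p^2$, $q^2$ or $pq$ lies in $H_M$, because $-1\not\equiv1\pmod M$. Hence these three elements are atoms, and
\[
p^2\cdot q^2=(pq)^2
\]
gives two distinct factorizations of the same element.

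\emph{Half-factoriality, sufficiency.} If $\varphi(M)\le2$, then every class in $(\Z/M\Z)^\times$ satisfies $c^2=1$, and there is at most one nontrivial class $c$. An element of $H_M$ is then a product of primes $\equiv1\pmod M$ and an even number of primes $\equiv c\pmod M$. Its atoms are the single primes $\equiv1$ and the products of two primes $\equiv c$. Consequently every factorization of $x$ has exactly
\[
\#\{\text{primes}\equiv1\}+\tfrac12\,\#\{\text{primes}\equiv c\}
\]
factors, counted with multiplicity, and this number depends only on $x$.

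\emph{Half-factoriality, necessity.} Suppose $\varphi(M)\ge3$. There are two cases.
\begin{itemize}[leftmargin=*]
\item If $(\Z/M\Z)^\times$ has an element $c$ of order $k\ge3$, take primes $p\equiv c$ and $q\equiv c^{-1}$. Then $p^k$, $q^k$ and $pq$ are atoms, and $p^kq^k=(pq)^k$ has factorizations of lengths $2$ and $k$.
\item Otherwise the group is elementary abelian of order at least $4$. Pick independent classes $a,b$ and primes $p\equiv a$, $q\equiv b$, $r\equiv ab$. Then $pqr$, $p^2$, $q^2$, $r^2$ are atoms, and $(pqr)^2=p^2q^2r^2$ has lengths $2$ and $3$.
\end{itemize}
Alternatively, we can quote that $H_M$ is a Krull monoid with class group $(\Z/M\Z)^\times$ in which every class contains primes, together with Carlitz's theorem.

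\emph{Main obstacle.} The delicate step is checking that the exhibited elements are really atoms. Each one is a product of a few primes, and we must verify that no proper subproduct is $\equiv1\pmod M$. This is immediate from the chosen residue classes, but it must be checked in each case. We also need Dirichlet's theorem to supply primes in the required residue classes.
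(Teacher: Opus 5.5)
Your proof is correct and is essentially the explicit argument the paper gives right after its Krull-monoid proof: reduction to $H_{M_s}$, Dirichlet primes in prescribed classes to exhibit non-unique factorizations, and the atom/weight count when $\varphi(M)\le2$. The only differences are in the witnesses. You use $p^2\cdot q^2=(pq)^2$ with two primes $\equiv-1\pmod M$ instead of four, and for $\varphi(M)\ge3$ you split into the cases of an element of order $k\ge3$ and an elementary abelian $2$-group, whereas the paper handles both at once with $(pqr)(p'q'r')=(pp')(qq')(rr')$ built from units $g,h$ with $g,h,gh\ne1$.
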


\begin{proof}
It suffices to prove the corresponding statements for $H_M$ with $M=M_s$. These statements are known \cite[\S3]{BaginskiChapman}, \cite{HalterKoch,JamesNiven}, and we recall the argument. Factorizations into atoms exist because an element of $H_M$ that is not an atom is a product of two smaller elements. Let $F$ be the free commutative monoid on the primes not dividing $M$. Then $H_M$ is a saturated submonoid of $F$ (if $x,y\in H_M$ and $x\mid y$ in $F$, then $y/x\in H_M$), and the embedding is a divisor theory: every $\alpha\in F$ is the greatest common divisor of $\alpha q$ and $\alpha q'$, where $q,q'\nmid\alpha M$ are distinct primes with $q\equiv q'\equiv\alpha^{-1}\pmod M$. The class group $\mathsf q(F)/\mathsf q(H_M)$ is $(\Z/M\Z)^\times$. Thus $H_M$ is a Krull monoid with class group $(\Z/M\Z)^\times$, and by Dirichlet's theorem every class contains infinitely many primes. For such monoids, factorization is unique if and only if the class group is trivial, and all factorizations of each element have the same length if and only if the class group has at most two elements; the latter is Carlitz's theorem \cite{Carlitz}, which holds for Krull monoids of this kind \cite{GHK}. Here the class group has order $\varphi(M)$, where $\varphi$ is Euler's function, and $\varphi(M)=1$ if and only if $M\in\{1,2\}$, while $\varphi(M)\le2$ if and only if $M\in\{1,2,3,4,6\}$. By Corollary~\ref{cor:divisibility}, $M_s\in\{1,2\}$ if and only if $s\in\{3,4\}$, and $M_s\in\{1,2,3,4,6\}$ if and only if $s\in\{3,4,5,6,8\}$.
\end{proof}

The appeal to the theory of Krull monoids can be replaced by explicit factorizations, and this is how the formal verification proceeds; Dirichlet's theorem provides the primes used below. If $M\in\{1,2\}$, the irreducible elements of $H_M$ are the primes not dividing $M$, and factorization is unique. If $M\notin\{1,2\}$, choose distinct primes $p_1,p_2,p_3,p_4\equiv-1\pmod M$; then $(p_1p_2)(p_3p_4)=(p_1p_3)(p_2p_4)$ are two different factorizations into irreducible elements of $H_M$. If $\varphi(M)\ge3$, there are units $g,h$ of $\Z/M\Z$ with $g\ne1$, $h\ne1$ and $gh\ne1$; for primes $p,q,r,p',q',r'$ in the classes $g,h,(gh)^{-1},g^{-1},h^{-1},gh$, the element $(pqr)(p'q'r')=(pp')(qq')(rr')$ has factorizations into irreducible elements of lengths $2$ and $3$. Finally, let $M\in\{1,2,3,4,6\}$, so that every unit of $\Z/M\Z$ is $\pm1$, and give each prime factor $\equiv1\pmod M$ weight $2$ and every other prime factor weight $1$. If an irreducible element $x$ of $H_M$ has a prime factor $p\equiv1\pmod M$, then $x=p$; otherwise all prime factors of $x$ are $\equiv-1\pmod M$, and since the product of any two of them lies in $H_M$, $x$ has exactly two prime factors. Thus every irreducible element of $H_M$ has weight $2$, and all factorizations of an element of $H_M$ have the same length. It is not enough here that every unit of $\Z/M\Z$ squares to $1$: for $M=8$, the element $105=3\cdot5\cdot7$ of $H_8$ is irreducible and has weight $3$.

Theorem~\ref{thmD} is Theorem~\ref{thm:factorization} together with the isomorphism $\End^*(\cP_s)\cong H_{M_s}$.

\begin{example}\label{ex:hilbert}
For hexagonal numbers ($M_6=4$) the self-maps are $E_{6,x}(N)=x^2N+\frac{x^2-1}{8}$ with $x\equiv1\pmod4$, and
\[
E_{6,9}\circ E_{6,49}=E_{6,21}\circ E_{6,21},\qquad\text{i.e.}\qquad 81\,(2401N+300)+10=441\,(441N+55)+55,
\]
where $E_{6,9}$, $E_{6,21}$ and $E_{6,49}$ are irreducible because $3,7\not\equiv1\pmod4$; this is Hilbert's example $441=9\cdot49=21\cdot21$ in $H_4$. For heptagonal numbers ($M_7=10$) even the numbers of factors differ: $E_{7,81}\circ E_{7,2401}=E_{7,21}\circ E_{7,21}\circ E_{7,21}\circ E_{7,21}$, with all factors irreducible. For triangular numbers, by contrast, the irreducible self-maps are the Euler maps $E_p$ with $p$ an odd prime, and every self-map factors uniquely into them.
\end{example}

\subsection{Local obstructions}\label{sec:local}
Theorem~\ref{thm:sparse} does not apply to sets such as the squarefree numbers or the sums of two squares, whose gaps do not tend to infinity. For such sets, and for the primes, congruence arguments take its place, as in the proof of Theorem~\ref{thm:two}.

\begin{proposition}\label{prop:local}
\begin{enumerate}[label=(\alph*),leftmargin=*]
\item The only self-map of the set of primes is the identity.
\item The only self-map of the set of squarefree positive integers is the identity.
\item The self-maps of $\Sigma=\{x^2+y^2:\ x,y\in\Z\}$ are the maps $x\mapsto ax$ with $a\in\Sigma$ and $a\ge1$. This monoid is free, with basis consisting of $2$, the primes $p\equiv1\pmod4$ and the squares $q^2$ of the primes $q\equiv3\pmod4$.
\end{enumerate}
\end{proposition}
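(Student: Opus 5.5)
In all three parts the first step is Proposition~\ref{prop:basic}. Each of the three sets is infinite and bounded below, so any self-map has the form $\phi(x)=ax+b$ with $a\ge1$ and $a,b\in\Z$. The task is then to show that $a$ and $b$ are forced. Since the gaps of these sets do not tend to infinity, Theorem~\ref{thm:sparse} is unavailable. Instead I would use congruence obstructions, in the style of the proof of Theorem~\ref{thm:two}: choose an element $s$ of the set, by Dirichlet's theorem or the Chinese remainder theorem, for which $as+b$ is forced to fail membership.

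For (a), suppose $\phi$ maps primes to primes and $\phi\ne\id$. I would first show $\gcd(a,b)=1$: any common prime factor $q$ of $a$ and $b$ divides every $ap+b$, and these values are distinct primes, which is impossible. If $a>1$, or if $a=1$ and $b\ne0$, I would look for a prime $q\nmid a$ and a prime $p\ne q$ with $ap+b\equiv0\pmod q$ and $ap+b>q$. Dirichlet's theorem supplies such $p$ in the residue class $-b a^{-1}\pmod q$, provided that class is nonzero mod $q$, which holds when $q\nmid b$. So the plan is to pick a prime $q$ dividing neither $a$ nor $b$, with $q>|b|$ (for instance). Then $ap+b$ is a multiple of $q$ exceeding $q$, hence composite, and this gives the contradiction. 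The case $b=0$, $a\ge2$ is immediate because $ap$ is composite. If $a=1$ and $b=0$, then $\phi=\id$.

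For (b), the set of squarefree integers contains $1$, so $a+b$ is squarefree. Assume $\phi\ne\id$. If $a=1$ and $b\ne0$, I would pick a prime $\ell\nmid b$ and use the Chinese remainder theorem to find a squarefree $n$ with $n\equiv-b\pmod{\ell^2}$. This requires showing that the residue class $-b$ mod $\ell^2$, intersected with suitable classes, contains squarefree numbers. Every class coprime to $\ell$ contains squarefree numbers, since their density in such a progression is positive. If $a\ge2$, let $q\mid a$ be prime. If $q\nmid b$, solve $an+b\equiv0\pmod{\ell^2}$ for a prime $\ell\nmid a$, again with $n$ squarefree in a residue class coprime to $\ell$. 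If $q\mid b$, choose $n\equiv -b/q\cdot(a/q)^{-1}$ modulo $q$ so as to make $q^2\mid an+b$, when possible. The case analysis on the common factors of $a$ and $b$ is the fiddly part. The uniform strategy is to find a prime $\ell$ and a class $r$ mod $\ell^2$ with $ar+b\equiv0\pmod{\ell^2}$ and $\ell\nmid r$, and then to use positive density of squarefree numbers in that class. Such $\ell$ exists unless $a=\ell\cdot(\text{unit})$ situations interfere; taking any $\ell\nmid a$ always works, with $r\equiv-ba^{-1}$, provided $\ell\nmid b$. If $b=0$ instead, then $a\ge2$ is not squarefree-preserving, since $a\cdot a'$ for a prime $a'\mid a$ is not squarefree.

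For (c), I expect the main obstacle to be showing $b=0$. Put $h=b$ and argue as in Theorem~\ref{thm:two}. If $b\ne0$, take a prime $\ell\equiv3\pmod4$ with $\ell\nmid ab$, and find $m\in\Sigma$ with $am+b\equiv\ell\pmod{\ell^2}$ by lifting a representation via Hensel's lemma. This contradicts membership of $am+b$ in $\Sigma$. Hence $b=0$, and $a=\phi(1)\in\Sigma$. The converse is the Brahmagupta--Fibonacci identity. Freeness would follow from the characterization of $\Sigma$ by prime exponents: an element of $\Sigma\cap\N$ is $2^e\prod p_i^{e_i}\prod q_j^{2f_j}$, and this decomposition is unique.
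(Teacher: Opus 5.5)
Your proposal is correct and follows essentially the paper's route. You use Proposition~\ref{prop:basic} to get $a\ge1$ and $a,b\in\Z$, then a Dirichlet or congruence obstruction forcing $b=0$ in each part (with the same Hensel lift as in Theorem~\ref{thm:two} for (c)), and finally $a=1$ or $a=\phi(1)\in\Sigma$. In (b) the case split on common prime factors of $a$ and $b$ is unnecessary: once $b\ne0$ you can take a prime $\ell\nmid ab$ and a squarefree $n\equiv-ba^{-1}\pmod{\ell^2}$, which is your ``uniform strategy''; it differs only cosmetically from the paper's choice of $\ell\nmid a$ with $\ell^2\nmid b$, which uses squarefree numbers in an arbitrary nonzero class modulo $\ell^2$.
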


\begin{proof}
In each case a self-map $\phi(x)=ax+b$ has $a\in\N$ and $b\in\Z$ by Proposition~\ref{prop:basic}.

(a) Suppose that $ap+b$ is prime for every prime $p$, and that $b\ne0$. Choose a prime $\ell\nmid ab$. By Dirichlet's theorem there are infinitely many primes $p\equiv-ba^{-1}\pmod\ell$; for them $\ell\mid ap+b$, while $ap+b>\ell$ once $p$ is large, a contradiction. So $b=0$, and then $a=1$.

(b) Suppose that $b\ne0$, and choose a prime $\ell\nmid a$ with $\ell^2\nmid b$. Squarefree numbers occur in every residue class modulo $\ell^2$ other than $0$, so there is a large squarefree $s$ with $as+b\equiv0\pmod{\ell^2}$, a contradiction. So $b=0$. If $a>1$, then for a prime divisor $p$ of $a$ the number $ap$ is divisible by $p^2$, although $p$ is squarefree; hence $a=1$.

(c) The maps $x\mapsto ax$ with $a\in\Sigma$ preserve $\Sigma$ by the Brahmagupta--Fibonacci identity. Conversely, $b=\phi(0)\in\Sigma$. If $b\ne0$, choose a prime $\ell\equiv3\pmod4$ with $\ell\nmid ab$. As in the proof of Theorem~\ref{thm:two}, there is $m\in\Sigma$ with $am+b\equiv\ell\pmod{\ell^2}$, and then $am+b\notin\Sigma$, a contradiction. So $b=0$, and $a=\phi(1)\in\Sigma$. A positive integer lies in $\Sigma$ if and only if each prime $q\equiv3\pmod4$ divides it to an even power, which gives the description of the monoid.
\end{proof}

\subsection*{Acknowledgments}
The literature search and the Lean formalization were prepared with the assistance of the AI system Claude (Anthropic). The author takes full responsibility for the content of this paper.

\end{document}